\setlist[enumerate]{label=({\roman*})}
\newtheorem{theorem}{Theorem}
\newtheorem{proposition}[theorem]{Proposition}
\newtheorem{corollary}[theorem]{Corollary}
\newtheorem{lemma}[theorem]{Lemma}
\theoremstyle{definition}
\newtheorem{remark}[theorem]{Remark}
\newtheorem{remarks}[theorem]{Remarks}
\numberwithin{equation}{section}
\numberwithin{theorem}{section}
\def\<{\langle}
\def\>{\rangle}
\def\e{\epsilon}
\def\Z{\mathbb{Z}}
\def\R{\mathbb{R}}
\def\C{\mathbb{C}}
\def\vp{\varphi}
\def\L{L_\varphi}
\def\R{\mathbb R}
\def\Z{\mathbb Z}
\def\W{\mathcal W}
\def\PG{P_{\mathrm{G}}}
\def\Gab{G^{\mathrm{ab}}}
\def\oG{\overline{G}}
\begin{document}
\bibliographystyle{plain} \title[Gurevi\v{c} pressure and equidistribution]
{Gurevi\v{c} pressure and equidistribution for amenable extensions of countable state Markov shifts}

\author{Richard Sharp} 
\address{Mathematics Institute, University of Warwick,
Coventry CV4 7AL, U.K.}

\thanks{\copyright 2025. This work is licensed by a CC BY license.}

\keywords{}

\begin{abstract}
We obtain a weighted equidistribution theorem for amenable skew product extensions of 
countable state Markov shifts satisfying the BIP property. We also show, without requiring the BIP property, that 
Gurevi\v{c} pressure for an amenable skew product agrees with the Gurevi\v{c} pressure for the abelianized system.
This had been proved by Dougall and Sharp in the case where the base is a subshift of finite type.
The equality of Gurevi\v{c} pressures is a key part of the proof of the equidistribution result.
\end{abstract}

\maketitle

%
%
%
%
%
%
%
%
%

\section{Introduction}

It is well-known that, for mixing subshifts of finite type
$\sigma : \Sigma \to \Sigma$  (and hyperbolic systems more generally), weighted averages of periodic points 
become equidistributed with respect to the Gibbs measure for the weighting potential, and this continues to hold 
in the countable state case provided the so-called \emph{big images and pre-images} (BIP) property 
is satisfied. 
Precisely, if $\delta_x$ is the Dirac measure at $x$ and 
$\tau_{x,n} = \frac{1}{n}\sum_{j=0}^{n-1} \delta_{\sigma^jx}$ then, for suitably regular $\vp : \Sigma \to \R$,
and writing
\[
\vp^n := \vp + \vp \circ \sigma + \cdots \vp \circ \sigma^{n-1},
\]
the sequence of probability measures
\[
\left(\sum_{\sigma^nx=x} e^{\vp^n(x)} \right)^{-1}
\sum_{\sigma^n x=x} e^{\vp^n(x)} \tau_{x,n}
\]
converges weakly to $\mu_\vp$, the Gibbs measure for $\vp$, as $n \to \infty$.
(For subshifts of finite type, see Bowen \cite{Bowen1971} when $\vp=0$ and Parry \cite{Parry1988},
Parry and Pollicott \cite{PP} for general H\"older continuous $\vp$. The result 
in \cite{Parry1988} and \cite{PP} is given for flows
but the shift case follows directly by considering a constant suspension.
For the countable state case with BIP, the equidistribution result can be obtained from the stronger large deviation results in
Takahasi \cite{Takahasi2019}.)
However, more interesting phenomena occur if we consider extensions of $\Sigma$ by a (finitely generated, infinite)
countable group $G$ and only consider those periodic points for $\sigma : \Sigma \to \Sigma$
which correspond to periodic points in the extension.
More precisely, given $\psi : \Sigma \to G$, define the skew product extension
$T_\psi : \Sigma \times G \to \Sigma \times G$ by
\[
T_\psi(x,g) = (\sigma x,g\psi(x)).
\]
Then we consider points $x \in \Sigma$ such that $T_\psi^n(x,e)=(x,e)$, i.e. $\sigma^nx=x$ and 
$\psi_n(x)=e$, where $e$ is the identity in $G$
and
\[
\psi_n(x):= \psi(x)\psi(\sigma x) \cdots \psi(\sigma^{n-1}x).
\] 
(Note
that $T_\psi^n(x,e)=(x,e)$ if and only if $T_\psi^n(x,g)=(x,g)$ for all $g \in G$.)
Given a (sufficiently nice)
function $\vp : \Sigma \to \R$, we might seek to understand the limiting behaviour of the measures
\[
\mathfrak M_n :=
\left(\sum_{\substack{\sigma^nx=x \\ \psi_n(x)=e}} e^{\vp^n(x)} \right)^{-1}
\sum_{\substack{\sigma^n x=x \\ \psi_n(x)=e}} e^{\vp^n(x)} \tau_{x,n}
\]
This problem was studied when $\Sigma$ is a subshift of finite type and $G = \Z^d$, in which case,
subject to a mixing condition on the extension,
the weak limit exists and is equal to the Gibbs measure for $\vp + \langle \xi,\psi \rangle$, for some 
(unique) $\xi = \xi(\vp) \in \R^d$ (where $\langle \cdot,\cdot \rangle$ is the standard inner product on $\R^d$).
This value $\xi$ is characterised by minimizing the pressure function $P(\vp+\langle w,\psi\rangle,\sigma)$
over $w \in \R^d$, or by the variational principle
\[
h(\mu_{\vp+\langle \xi,\psi \rangle}) = \sup\left\{h(\nu) + \int \vp \, d\nu \hbox{ : } \nu \in \mathcal M_\sigma(\Sigma),
\  \int \psi \, d\nu=0\right\},
\]
where $\mathcal M_\sigma(\Sigma)$ is the set of $\sigma$-invariant Borel probability measures on $\Sigma$
and $h(\nu)$ is the measure-theoretic entropy of $\nu$ with respect to $\sigma$.
In the technically more involved situation of Anosov flows and homology, the analogous
convergence statement for $\vp=0$ was proved by Sharp \cite{Sharp93} (see also Lalley \cite{LalleyAdv}
and Babillot and Ledrappier \cite{BabLed})
and for general H\"older continuous
$\vp$ by Coles and Sharp \cite{ColesSharp}, and it is easy to recover the shift case from the results in these
papers.
It is now apparent that a version of this result should hold whenever $G$ is amenable:
again a result for Anosov flows was proved by Dougall and Sharp \cite{DougallSharp2021}
and one can adapt this to subshifts of finite type (indeed, the analysis in \cite{DougallSharp2021}
proceeds via subshifts of finite type and symbolic dynamics to obtain the flow result).
In the amenable case, the limit measure is exactly the same as one would obtain if one abelianized the group $G$. More precisely, if $\overline{G}\cong \Z^d$ is the torsion-free part of the abelianization of $G$
and $\pi : G \to \overline{G}$ is the projection homomorphism, then
the limiting measure is $\mu_{\vp+\langle \xi,\bar \psi\rangle}$, where 
$\bar \psi : \Sigma \to \overline{G}$ is defined by $\bar \psi = \pi \circ \psi$ and
$\xi=\xi(\vp)$ is as above. (If $d=0$ then $\xi=0$ and one simply obtains $\mu_\vp$ in the limit.)

Our first main result is to extend this equidistribution for amenable skew products to mixing countable state Markov shifts 
satisfying BIP. Omitting some more technical hypotheses, we show that if
\begin{itemize}
\item
$\sigma : \Sigma \to \Sigma$ satisfies BIP;
\item
$G$ is amenable;
\item
$T_\psi : \Sigma \times G \to \Sigma \times G$ is topologically mixing; and
\item
$\vp : \Sigma \to \R$ is locally H\"older continuous
\end{itemize}
then there exists $\xi \in \R^d$ such that
$\mathfrak M_n$ converges to 
$\mu_{\vp + \langle \xi,\bar \psi \rangle}$, as $n \to \infty$,
with respect to the weak$^*$ topology on $\mathcal M_\sigma(\Sigma)$.
The result is stated precisely as Theorem \ref{th:equi_for_G}.
The key technical assumption is that, writing $|\cdot|$ for the $2$-norm on $\R^d$,
\begin{equation}\label{eq:cramer-intro}
\sup_{x \in \Sigma} \sum_{\sigma^n y = x} e^{\vp^n(y) + r|\bar \psi^n(y)|}
\end{equation}
is finite for some $r>0$ -- this is somewhat analogous to Cram\'er's condition for random walks
(existence of an exponential moment) or to the entropy gap assumption
that appears in \cite{BCKM} and \cite{KM}, for example.
Of course, for a subshift of finite type, (\ref{eq:cramer-intro}) is automatically finite for all $r>0$.

The equidistribution result is proved via large deviations, and to prove the latter, we need to show that,
when $G$ is amenable, the Gurevi\v{c} pressure 
of $\vp$ with respect to the skew product $T_\psi$ is equal to the Gurevi\v{c} pressure 
of $\vp$ with respect to the abelianized skew product $T_{\bar \psi}$ (which, in turn, is equal to
the Gurevi\v{c} pressure of $\vp + \langle \xi,\bar \psi \rangle$ with respect to $\sigma$).
The largest part of the paper is devoted to proving the first equality.
 To motivate this, we briefly outline
some older results for random walks, spectral geometry and group theory, before returning to 
dynamical questions.

A classical result of Kesten for symmetric random walks on countable groups is that the exponential rate of 
return to the identity is equal to $1$ if the group is amenable and is less than $1$ otherwise \cite{Kesten}.
Variants of this amenability dichotomy were subsequently obtained in the settings of 
the spectrum of the Laplacian for coverings of Riemannian manifolds
\cite{Brooks1981}, \cite{Brooks1985}, group theory and graph theory  
\cite{Cohen1982}, \cite{Grigorchuk1980}, \cite{Northshield1992}, \cite{Northshield2004}
and critical exponents for discrete groups \cite{CDS},
\cite{CDST}, \cite{Dougall2019}, \cite{DougallSharp2016}, \cite{Roblin}, \cite{Stadlbauer2013}.
The latter results also have a dynamical interpretation in terms of geodesic flows
(over manifolds with negative sectional curvatures) and, more generally, Anosov flows with a time-reversing 
involution.  In this setting, with time-reversal symmetry, one compares the growth rate of the number of periodic orbits 
of the flow (which is equal to the topological entropy) with the corresponding quantity for the lift of
the flow to a regular cover: the growth rates agree if and only if the covering group is amenable.
For general Anosov flows, without this additional symmetry, the amenability dichotomy fails (even for abelian covers).
However, in \cite{DougallSharp2021}, Dougall and the author showed that a modified result holds, where the comparison is between growth rates for the given cover and the maximal abelian subcover:
these two quantities agree if and only if the covering group is amenable.
The argument in \cite{DougallSharp2021} (with some errors rectified in the associated correction)
used symbolic dynamics, where the Anosov flow is 
coded by a suspension flow over a subshift of finite type. In particular, it boils down showing equality of Gurevi\v{c} 
pressure for a skew product extension associated to the covering group with the corresponding quantity for its abelianization.
Here we give a new proof, modelled on the argument given for random walks 
in \cite{DougallSharp2024}, which works in the countable state case and which gives a cleaner proof.
(Both proofs have their roots in ideas of Roblin \cite{Roblin} and Stadlbauer \cite{Stadlbauer2019}.)
We stress that, for the direction ``amenability implies equality''
stated in Theorem \ref{th:main} below, the shift $\sigma : \Sigma \to \Sigma$ does not need to satisfy BIP.
(The recent paper \cite{GomezTerhesiu2025} obtains the conclusion of Theorem \ref{th:main} when the base is a full branch Gibbs--Markov map satisfying BIP.)

We now state this result in detail (although some definitions will be deferred until 
section \ref{sec:CSMS}).
Given a locally  H\"older continuous function $\vp : \Sigma \to \R$, we have induced functions
$\tilde \vp : \Sigma \times G \to \R$ and $\bar \vp : \Sigma \times \overline G \to \R$, defined by
$\tilde \vp(x,\cdot) =\vp(x)$ and $\bar \vp(x,\cdot)=\vp(x)$, respectively.
To avoid over-cluttering formulae, we will slightly abuse notation by denoting all three functions
$\vp$, $\bar \vp$ and $\tilde \vp$ by $\vp$.
We have the following inequalities for the (Gurevi\v{c}) pressure of these functions:
\begin{equation}\label{eq:gur_pres_inequalities}
\PG(\vp,T_\psi) \le \PG(\vp,T_{\bar \psi}) \le \PG(\vp + \langle w,\bar \psi \rangle,\sigma),
\end{equation}
where the second inequality holds for all $w \in \R^d$. (This will be justified in section \ref{sec:skew_prod_ext}.)

\begin{theorem}\label{th:main}
Suppose that 
\begin{itemize}
\item
$\sigma : \Sigma \to \Sigma$ is a (one-sided) 
countable state Markov shift, 
\item
$G$ is a countable group,
\item 
$T_\psi : \Sigma \times G \to \Sigma \times G$ be a topologically transitive skew-product extension,
where $\psi((x_i)_{i=1}^\infty)=\psi(x_1)$, and
\item
$\vp : \Sigma \to \mathbb R$ is a locally H\"older continuous function.
\end{itemize}
If $G$ is amenable then
$\PG(\vp,T_\psi) = \PG(\vp,T_{\bar \psi})$.
\end{theorem}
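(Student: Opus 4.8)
The plan is to prove the nontrivial inequality $\PG(\vp,T_{\bar\psi}) \le \PG(\vp,T_\psi)$, since the reverse inequality is already recorded in \eqref{eq:gur_pres_inequalities}. Equivalently, fixing a base symbol, say the cylinder $[a]$, and writing
\[
Z_n(\vp,T_\psi) = \sum_{\substack{\sigma^n x = x,\ x_1=a \\ \psi_n(x)=e}} e^{\vp^n(x)},
\qquad
Z_n(\vp,T_{\bar\psi}) = \sum_{\substack{\sigma^n x = x,\ x_1=a \\ \bar\psi_n(x)=0}} e^{\vp^n(x)},
\]
I want to show that the exponential growth rates of these two sequences coincide. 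Only $\limsup\frac1n\log Z_n(\vp,T_\psi) \ge \PG(\vp,T_{\bar\psi})$ needs proof. Since $G$ is amenable, F\o lner sets give, for any $\e>0$, a finite set $F\subset \ker\pi$ (the kernel of $\pi:G\to\oG$) that is almost invariant under the finitely many generators, and the strategy is to compare orbits that return to $\ker\pi$ (those counted by $Z_n(\vp,T_{\bar\psi})$) with orbits that return exactly to the identity, by absorbing the ``error'' in $\ker\pi$ using the F\o lner set.

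The key steps, in order, are as follows. \emph{Step 1: reduce to counting finite paths.} Using the Markov structure and local H\"older continuity, replace periodic-point sums by sums over admissible words $w=w_1\cdots w_n$ with $w_1=w_n=a$, weighted by $e^{\vp(w)}$ (with bounded distortion constants), and track the group element $\psi(w)\in G$ along the path; this is standard and is where BIP is \emph{not} needed because we only seek a lower bound on growth for $T_\psi$. \emph{Step 2: the F\o lner averaging / diffusion argument.} Following the random-walk argument of \cite{DougallSharp2024} (with roots in \cite{Roblin}, \cite{Stadlbauer2019}), fix a F\o lner set $F\subset\ker\pi$; for a long admissible loop at $a$ with $\bar\psi$-value $0$, its $G$-value lies in $\ker\pi$, and one builds from it, by pre- and post-concatenating with auxiliary excursions whose $G$-translates sweep out $F$, a family of genuine $T_\psi$-periodic orbits (returning to $e$), with a controlled multiplicity and a controlled change in length and in $\vp$-weight. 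Quantitatively, one shows
\[
\sum_{k} Z_{n+k}(\vp,T_\psi)\, (\text{bounded factors}) \ \gtrsim\ \frac{1}{|F|}\,Z_n(\vp,T_{\bar\psi})\, e^{-C}\,,
\]
so that, after taking $n\to\infty$ and then letting the F\o lner sets exhaust $\ker\pi$ (killing the $1/|F|$ on the exponential scale), one gets $\PG(\vp,T_\psi)\ge \PG(\vp,T_{\bar\psi})-\e$ for every $\e$. \emph{Step 3: combine with \eqref{eq:gur_pres_inequalities}} to conclude equality.

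The main obstacle is Step 2: making the ``diffusion'' precise in the skew-product setting. The difficulty is to produce, from one loop $w$ with $\psi(w)=g\in\ker\pi$, \emph{many distinct} closed loops at $a$ in the extension that return to $e$, without the number of added symbols or the added potential weight growing with $n$, and while keeping the map from (auxiliary data) to (new loops) injective enough that the multiplicity $|F|$ is not overcounted. One must choose, for each $h\in F$, a fixed admissible bridging word from $a$ to $a$ realising the group element needed to move $h\mapsto e$ (or $g h \mapsto$ something cancellable), which requires topological transitivity of $T_\psi$ to guarantee such bridges exist for all relevant group elements — here the hypothesis that $\psi$ depends only on $x_1$ keeps the bookkeeping finite. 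Handling the distortion of $\vp^n$ under concatenation (via local H\"older continuity) and ensuring the auxiliary words do not themselves spoil the $\ker\pi$ condition are the remaining technical points, but these are routine once the combinatorial scheme is set up.
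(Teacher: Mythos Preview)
Your proposal takes a genuinely different route from the paper, and the gap lies precisely in Step~2. The paper does \emph{not} run a direct F\o lner counting argument. Instead it builds, via a Patterson--Denker--Urba\'nski construction, a positive function $\Phi$ on (a countable subset of) $\Sigma\times G$ satisfying the eigen-equation $\mathcal L_\vp^n\Phi=\rho^n\Phi$ with $\rho=e^{\PG(\vp,T_\psi)}$; then applies a right-invariant Banach mean on $\ell^\infty(G)$ to $\log\Phi$ to produce $\phi$ with $\mathcal L_\vp^n\phi\le\rho^n\phi$ \emph{and} such that $\gamma\mapsto\phi(z,\gamma)/\phi(z,e)$ is a homomorphism $G\to\R^{>0}$; this homomorphism property is what allows $\phi$ to descend to $\bar\phi$ on $\Sigma\times\oG$, after which the desired inequality follows from Lemma~\ref{lem:GP_from_transfer}. (This is also the structure of the argument in \cite{DougallSharp2024}, which you cite but mischaracterise: that paper uses Banach means, not a F\o lner diffusion scheme.)

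Your F\o lner scheme, as written, does not close. The bridging words $u_h$ you need have lengths and $\vp$-weights that depend on $h\in F$, so the ``bounded factors'' and the constant $C$ in your displayed inequality necessarily depend on $F$; once $F$ is allowed to grow you lose control of these, and the $1/|F|$ factor no longer disappears harmlessly on the exponential scale. More fundamentally, for your concatenation to land at $e$ you need, for each loop $w$ with $\psi(w)=g\in\ker\pi$, some $h\in F$ with (say) $(hg)^{-1}\in F$, i.e.\ almost-invariance of $F$ under translation by the elements $g$ that actually occur. But these $g$'s are produced by the $G$-valued cocycle $\psi_n$, whose increments lie in $G$ and not in $\ker\pi$; the F\o lner condition for $F$ inside $\ker\pi$ (with respect to generators of $\ker\pi$) gives no control over translates by such $g$, and as $n\to\infty$ the set of relevant $g$ is unbounded, so no fixed $F$ works. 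This is exactly the obstruction that forces the non-symmetric argument to pass through a homomorphism: the Banach-mean step in the paper is what converts the $G$-dependence of $\Phi$ into something that factors through $\oG$, and your sketch has no analogue of that step.
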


\begin{remarks}\label{rem:remark to main}
(i) Since $T_\psi : \Sigma \times G \to \Sigma \times G$ is topologically transitive, $\sigma :\Sigma \to \Sigma$ is also topologically transitive, so we do not need to include this as a separate assumption on the base.

\noindent
(ii) If $\Sigma$ satisfies BIP
and $\vp$ and $\psi$ satisfy the same conditions needed for Theorem \ref{th:equi_for_G},
then the converse of statement also holds:
if $\PG(\vp,T_\psi) = \PG(\vp,T_{\bar \psi})$ then $G$ is amenable. This includes the case of subshifts of finite type (where the moment condition is automatic). The proof, which relies heavily on results
of Stadlbauer \cite{Stadlbauer2013} is given in section \ref{sec:eq imp am}.
\end{remarks}

We will now outline the contents of the rest of the paper. 
Section \ref{sec:CSMS} and 
section \ref{sec:skew_prod_ext} provide background on countable state Markov shifts, Gurevi\v{c} pressure and skew product extensions.
Section \ref{sec:am imp eq} contains the proof of Theorem \ref{th:main}.
Section \ref{sec:bip} introduces the BIP property and makes precise the assumptions we shall impose in our 
equidistribution results and section \ref{sec:equi_for_skew} contains the statement and proof of these results.
Finally, section \ref{sec:eq imp am} establishes the converse to Theorem \ref{th:main} subject to the assumption of BIP.

\section{Countable state Markov shifts}\label{sec:CSMS}
Let $S$ be a countable set and let $A$ be a matrix indexed by $S \times S$ with
entries in $\{0,1\}$. We call $S$ the set of states and $A$ the transition matrix. 
We write $A(s,s')$ for the $(s,s')$ entry of $A$.
We then let 
\[
\Sigma=\{x=(x_n)_{n=0}^\infty \hbox{ : } x_n \in S \mbox{ and } A(x_n,x_{n+1})=1 \ \forall n \in \Z^+\}.
\]
Let $\sigma : \Sigma \to \Sigma$ be the shift map defined by
$(\sigma x)_n=x_{n+1}$. Then we call $(\Sigma,\sigma)$
a (one-sided) countable state Markov shift. If $S$ is finite, then $(\Sigma,\sigma)$ is called a (one-sided) subshift of finite type.

A word $w=(w_1,\ldots,w_{n}) \in S^n$ is called \emph{allowed} if $A(w_i,w_{i+1})=1$ for all $i=1,\ldots,n-1$
and we call $[w]=\{x \in \Sigma \hbox{ : } x_i=w_i, \ \forall i=1,\ldots,n\}$ the cylinder defined by $w$.
Let 
$\mathcal W_n \subset S^n$ denote the set of all allowed words of length $n$ and let
$\mathcal W := \bigcup_{n=1}^\infty \mathcal W_n$. 
For $a \in S$, we write
\[
\mathcal W_n(a) := \{w=(w_1,\ldots,w_n)\in \mathcal W_n \hbox{ : } w_1=a \mbox{ and } 
A(w_n,a)=1\}.
\]
If $w$ is a finite work and $x$ is either a finite word or element of $\Sigma$ the we let $wx$ denote
the concatenation.

The topology on $\Sigma$ is the topology generated by cylinder sets. The shift map is continuous and
we havre that $\Sigma$ is compact if and only if $S$ is finite. 
We say that $\sigma : \Sigma \to \Sigma$ is \emph{topologically transitive} if there is a
point with a dense orbit. 
We say that
$\sigma : \Sigma \to \Sigma$ is \emph{topologically mixing}
if for all $a,b \in S$ there exists $N(a,b)$
such that for all $n \ge N(a,b)$ there exists $w \in \mathcal W_n$ with $awb \in \mathcal W$.
(This is equivalent to the usual definition of topological mixing in topological dynamical systems.)

We say that $\vp : \Sigma \to \R$ is \emph{locally H\"older continuous} if
\[
\sup_{w \in \W_n} \sup_{x,y \in [w]} |\vp(x)-\vp(y)| \le C\theta^n,
\]
for some $C>0$ and $0<\theta<1$, for all $n \ge 1$.
If we wishe to specify $\theta$ then we say that $\vp$ is
$\theta$-locally H\"older continuous.
We write
\[
\vp^n(x) := \vp(x) +\vp(\sigma x) + \cdots + \vp(\sigma^{n-1}x).
\]
A locally H\"older function satisfies the following estimates.

\begin{lemma}[Lemma 1 of \cite{Sarig-etds}]\label{lem:bounded_variation}
Let $\vp : \Sigma \to \mathbb R$ be locally H\"older continuous.
Then, for all $a \in S$ and all $o,o' \in [a]$ there exists $B>0$ such that
\[
|\vp^n(wo)-\vp^n(wo')| \le B
\]
for all $w \in \mathcal W_n$ such that $wa \in \mathcal W_{n+1}$.
\end{lemma}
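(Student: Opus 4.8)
The statement to prove is Lemma~\ref{lem:bounded_variation}: for a locally Hölder continuous $\vp$ and fixed $a \in S$, there is $B > 0$ so that $|\vp^n(wo) - \vp^n(wo')| \le B$ for all allowed words $w$ of length $n$ with $wa$ allowed and all $o,o' \in [a]$.

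\medskip

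The plan is to bound each summand in $\vp^n(wo) - \vp^n(wo') = \sum_{k=0}^{n-1}\bigl(\vp(\sigma^k(wo)) - \vp(\sigma^k(wo'))\bigr)$ using the local Hölder estimate, exploiting the fact that the points $\sigma^k(wo)$ and $\sigma^k(wo')$ agree in their first $n-k$ coordinates (both start with the word $w_{k+1}\cdots w_n$ followed by something in $[a]$). First I would fix $C>0$ and $0<\theta<1$ from the definition of local Hölder continuity for $\vp$. For $0 \le k \le n-1$, the points $\sigma^k(wo)$ and $\sigma^k(wo')$ both lie in the cylinder $[w_{k+1}, w_{k+2}, \ldots, w_n, a]$, which has length $n-k+1 \ge 2$; hence by the Hölder bound, $|\vp(\sigma^k(wo)) - \vp(\sigma^k(wo'))| \le C\theta^{n-k+1} \le C\theta^{n-k}$. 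Summing over $k = 0, 1, \ldots, n-1$ and reindexing with $j = n-k$ (so $j$ runs over $1, \ldots, n$) gives
\[
|\vp^n(wo) - \vp^n(wo')| \le \sum_{k=0}^{n-1} C\theta^{n-k} = \sum_{j=1}^{n} C\theta^{j} \le \frac{C\theta}{1-\theta} =: B,
\]
which is finite and independent of $n$, $w$, $o$ and $o'$ (and even of $a$). This completes the proof.

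\medskip

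There is essentially no serious obstacle here; the only point requiring a moment's care is verifying that the shifted points genuinely share $n-k+1$ coordinates, i.e. that $[w_{k+1},\ldots,w_n,a]$ is the correct cylinder and is nonempty — this uses precisely the hypotheses that $w \in \mathcal{W}_n$ and $wa \in \mathcal{W}_{n+1}$, so that $w_{k+1}\cdots w_n a$ is an allowed word and $o, o' \in [a]$ concatenate correctly after it. One should also note that the crude estimate $\theta^{n-k+1} \le \theta^{n-k}$ is wasteful but harmless; alternatively one keeps the extra factor of $\theta$ and gets the slightly sharper constant $B = C\theta/(1-\theta)$ as above. The statement as quoted allows $B$ to depend on $a$, but the argument in fact yields a uniform $B$.
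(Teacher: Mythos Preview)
Your proof is correct and is exactly the standard argument; the paper does not supply its own proof but simply cites Sarig, and what you have written is essentially Sarig's proof (telescoping $\vp^n$ and bounding each term by $C\theta^{n-k+1}$ via the local H\"older condition, then summing the geometric series).
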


Assume that $\sigma$ is topologically transitive and that
$\vp : \Sigma \to \R$ is locally H\"older continuous. Then we can define the \emph{Gurevi\v{c} pressure}
$\PG(\vp,\sigma) \in \mathbb R \cup\{\infty\}$ by
\[
\PG(\vp,\sigma) = \limsup_{n \to \infty} \frac{1}{n} \log \sum_{\sigma^nx=x} e^{\vp^n(x)} \mathbbm 1_{[a]}(x),
\]
for any choice of  $a \in S$ \cite{Sarig-etds}.
If $\sigma$ is topologically mixing, then the limsup may be replaced by a limit. 

The transfer operator $\L$ acts on real-valued function on $\Sigma$ by the formula
\[
(\L v)(x) = \sum_{\sigma y=x} e^{\vp(y)} v(y)
\]
(which may be infinite).
We say that $\vp$ is \emph{summable}
if $\|L_\vp 1\|_\infty<\infty$. 
For later use, we note that, using the local H\"older continuity of $\vp$, this is equivalent to the condition
\[
\sum_{a \in S} \exp\left(\sup\{\vp(y) \hbox{ : } y \in [a]\}\right)<\infty
\]
used in \cite{KK}, \cite{MU2001} and \cite{MU2003}.
If $\vp$ is summable then $\PG(\vp,\sigma)<\infty$.

We may also characterize the Gurevi\v{c} pressure directly in terms of the transfer operator.

\begin{lemma}[Theorem 4.4 of \cite{Sarig-survey}]\label{lem:GP_from_transfer}
If $\PG(\vp,\sigma)<\infty$ and $v : \Sigma \to \mathbb R$ is continuous, non-negative, not identically zero 
and with support contained inside a finite union of cylinders then
\[
\lim_{n \to \infty} \frac{1}{n} \log (\L^n v)(x) =\PG(\vp,\sigma)
\]
for all $x \in \Sigma$.
If the shift is topologically mixing then the limsup may be replaced by a limit.
\end{lemma}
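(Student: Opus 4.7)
The plan is to reduce the general statement to the anchor case $v = \mathbf{1}_{[a]}$ with $x \in [a]$, and then bootstrap using topological transitivity and a sandwiching argument to remove both restrictions. Throughout, the bounded variation estimate (Lemma~\ref{lem:bounded_variation}) does all the heavy lifting for comparing Birkhoff sums along different trajectories sharing a common prefix.

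First I would handle the anchor case. For $x \in [a]$, a preimage $y \in [a]$ of $x$ under $\sigma^n$ is determined by a choice of $w = (y_0,\ldots,y_{n-1})$ with $w_1 = a$ and $A(w_n, x_0) = A(w_n, a) = 1$, i.e.\ $w \in \mathcal W_n(a)$, yielding
\[
(\L^n \mathbf{1}_{[a]})(x) = \sum_{w \in \mathcal W_n(a)} e^{\vp^n(wx)}.
\]
The map $w \mapsto \overline w := www\cdots$ is a bijection between $\mathcal W_n(a)$ and the periodic points $z \in [a]$ with $\sigma^n z = z$. Since $x$ and $\overline w$ both lie in $[a]$, Lemma~\ref{lem:bounded_variation} gives $|\vp^n(wx) - \vp^n(\overline w)| \le B$ for all such $w$, hence
\[
e^{-B}\sum_{\sigma^n z = z} e^{\vp^n(z)}\mathbf{1}_{[a]}(z) \;\le\; (\L^n \mathbf{1}_{[a]})(x) \;\le\; e^{B}\sum_{\sigma^n z = z} e^{\vp^n(z)}\mathbf{1}_{[a]}(z).
\]
Taking $\frac{1}{n}\log$ and $\limsup$ and invoking the definition of $\PG(\vp,\sigma)$ closes this case.

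Next I would remove the restriction on $x$. For $x \in \Sigma$ with $x_0 = b$, topological transitivity yields allowed words $u^{(1)}$ from $a$ to $b$ and $u^{(2)}$ from $b$ to $a$, of fixed lengths $k_1, k_2$. For the upper bound, each $w$ contributing to $(\L^n\mathbf{1}_{[a]})(x)$ can be concatenated with $u^{(2)}$ (dropping an overlapping symbol) to produce a word in $\mathcal W_{n+k_2-1}(a)$, so by Lemma~\ref{lem:bounded_variation} the sum $(\L^n\mathbf{1}_{[a]})(x)$ is bounded above by a constant (depending on $u^{(2)}$ and $\vp$, independent of $n$) times the periodic-point sum at level $n+k_2-1$. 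For the lower bound, one runs the reverse construction using $u^{(1)}$, inserting it between $[a]$ and $x$ to produce genuine preimages. Both bounds give $\limsup \frac{1}{n}\log (\L^n\mathbf{1}_{[a]})(x) = \PG(\vp,\sigma)$ because $k_1,k_2$ are fixed.

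Finally, to pass from $\mathbf{1}_{[a]}$ to a general $v$, I sandwich. Since $v$ is continuous, non-negative and non-trivially supported in a finite union of cylinders, there exist a finite $S_0 \subset S$ and $C = \|v\|_\infty$ with $v \le C \sum_{a \in S_0} \mathbf{1}_{[a]}$, giving the upper bound by linearity and the previous step applied to finitely many anchor states. For the lower bound, continuity supplies some cylinder $[w_0]$ of length $m_0$ and $c>0$ with $v \ge c\mathbf{1}_{[w_0]}$; expanding $\L^n \mathbf{1}_{[w_0]}(x)$ as a sum over words with prescribed first $m_0$ symbols factors (up to the bounded variation error) as $e^{\vp^{m_0}(w_0\cdots)}\L^{n-m_0}\mathbf{1}_{[a_0]}(x)$, where $a_0$ is the last symbol of $w_0$, reducing to the already-handled case. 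The main obstacle is keeping the comparison constants in the transitivity step independent of $n$; for topologically mixing shifts, one may replace single connecting words by connecting words of any sufficiently large length, which upgrades the $\limsup$ to a genuine limit via a subadditivity/positivity argument applied to the periodic-point sums.
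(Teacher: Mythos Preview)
The paper does not give its own proof of this lemma; it is quoted as Theorem 4.4 of \cite{Sarig-survey} and used as a black box. Your argument is a correct reconstruction of the standard proof: the anchor case via the bijection $\mathcal W_n(a)\leftrightarrow \mathrm{Fix}_n(\sigma)\cap[a]$ together with Lemma~\ref{lem:bounded_variation} is exactly right, and the sandwiching step for general $v$ is clean. The only place that is slightly underspecified is the lower bound in the transitivity step: ``inserting $u^{(1)}$ between $[a]$ and $x$'' should be read as restricting the sum defining $(\L^n\mathbf{1}_{[a]})(x)$ to those preimages whose symbols in positions $n-k_1+1,\ldots,n$ equal $u^{(1)}$, which gives $(\L^n\mathbf{1}_{[a]})(x)\ge e^{\vp^{k_1}(u^{(1)}x)}\,(\L^{n-k_1}\mathbf{1}_{[a]})(u^{(1)}x)$ with $u^{(1)}x\in[a]$, and then the anchor case applies. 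With that clarification the argument goes through. (Note the displayed formula in the statement should read $\limsup$ rather than $\lim$; you correctly work with $\limsup$ throughout.)
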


\section{Skew product extension}\label{sec:skew_prod_ext}
We now consider skew product extensions.
Let $G$ be a finitely generated countable group (with the discrete topology), with identity $e$.
Suppose we are given a function $\psi : \Sigma \to G$ with $\psi((x_i)_{i=1}^\infty) = \psi(x_1)$.
(Any continuous function from $\Sigma$ to $G$ is locally constant and can be reduced to this form
by recoding.)
Then we can define the skew production extension
\[
T_\psi : \Sigma \times G \to \Sigma \times G
\]
by
\[
T_\psi(x,g) = (\sigma x,g\psi(x)).
\]
Note that
\[
T_\psi^n(x,g) = (\sigma^nx,g\psi_n(x)),
\]
where
\[
\psi_n(x) = \psi(x) \psi(\sigma x) \cdots \psi(\sigma^{n-1}x).
\]
We will always assume that $T_\psi$ is topologically transitive.

The system
$T_\psi : \Sigma \times G \to \Sigma \times G$ is also a countable state Markov shift with set of states
$S \times G$ and a transition matrix $\mathbb A$ defined by
\[
\mathbb A((i,g),(j,h))=1
\quad
\iff
\quad 
A(i,j)=1 \mbox{ and }  g\psi(i,j)=h.
\]
Thus, Gurevi\v{c} pressure is also defined for the extension.

Given an extension with group $G$, we will also need to consider the abelianized system
as follows.
Let $G^{\mathrm{ab}}$ denote the abelianization of $G$ and let $\overline G$ denote the torsion-free part of $G^{\mathrm{ab}}$, i.e. $\oG$ is the quotient of $G^{\mathrm{ab}}$ by its torsion subgroup.
Then $\oG$ is isomorphic to $\Z^d$, for some $d \ge 0$.
We use additive notation and let
$0$ to denote the identity.
Let $\pi :G \to \oG$ be the projection homomorphism.
Define $\bar \psi : \Sigma \to \oG$ by $\bar \psi = \pi \circ \psi$;
then we have the skew product $T_{\bar \psi} : \Sigma \times \oG \to \Sigma \times \oG$.
(If $d=0$ then we can regard $T_{\bar \psi}$ as $\sigma$.)

Given $\vp : \Sigma \to \R$, define $\tilde \vp : \Sigma \times G \to \R$ by
$\tilde \vp(x,g) = \vp(x)$ and 
$\bar \vp : \Sigma \times \overline{G} \to \R$ by 
$\bar \vp(x,k) = \vp(x)$. Following the convention in the introduction, 
we write all of these functions as $\vp$.
It is immediate from the definition of Gurevi\v{c} pressure that
\[
\PG(\vp,T_\psi) \le \PG(\vp,T_{\bar \psi}) \le \PG(\vp ,\sigma),
\]
Furthermore, if $T_{\bar \psi}^n(x,m)=(x,m)$ then $\sigma^nx=x$ and $\bar \psi^n(x)=0$, so we see that 
\[
\PG(\vp,T_{\bar \psi}) = \PG(\vp + \langle w,\bar \psi \rangle,T_{\bar \psi}),
\]
for all $w \in \R^d$. Together, these imply the inequalities (\ref{eq:gur_pres_inequalities}).

Each of the skew products has its own transfer operator (associated to $\vp$),
which we distinguish by using different fonts, as follows:
\[
(\mathcal L_{\vp}v)(x,g) := \sum_{T_\psi(y,h) = (x,g)} e^{\vp(y)} v(y,h)
= \sum_{\sigma y=x} e^{\vp(y)} v(y,g\psi(y)^{-1})
\]
and
\[
(\mathfrak L_{\vp}v)(x,m) := \sum_{T_{\bar \psi}(y,k) = (x,m)} e^{\vp(y)} v(y,k)
= \sum_{\sigma y=x} e^{\vp(y)} v(y,m-\bar \psi(y))
\]
The formulae for iterates are
\[
(\mathcal L_{\vp}^nv)(x,g) =
\sum_{T_\psi^n(y,h)=(x,g)} e^{\vp^n(y)} v(y,h)
=\sum_{\sigma^n y=x} e^{\vp^n(y)}
v(y,g\psi_n(y)^{-1})
\]
and
\[
(\mathfrak L_{\vp}^n v)(x,m) = \sum_{T_{\bar \psi}^n(z,k) = (x,m)} e^{\vp(y)} v(z,k)
= \sum_{\sigma y=x} e^{\vp(y)} v(y,m-\bar \psi_n(y)).
\]

\section{Amenability implies equality} \label{sec:am imp eq}

In the section we prove Theorem \ref{th:main}. Since we know from (\ref{eq:gur_pres_inequalities}) that
$\PG(\vp,T_\psi) \le \PG(\vp,T_{\bar \psi})$, we only
need to prove that $\PG(\vp,T_{\bar \psi}) \le \PG(\vp,T_\psi)$.

Fix $a \in S$; for $o \in [a]$ and $g \in G$, we will work with the quantity
\[
\sum_{\substack{\sigma^n x=o \\x_1=a\\ \psi_n(x)=g}} e^{\vp^n(x)}
=
\sum_{\substack{\sigma^n y=x}} e^{\vp^n(y)} \mathbbm 1_{[a] \times \{e\}}(x,g\psi_n(x)^{-1})
= (\mathcal L_{\vp}^n \mathbbm 1_{[a] \times \{e\}})(o,g).
\]

Since $T_\psi$ is topologically transitive, it follows from Lemma 
\ref{lem:GP_from_transfer} that
\begin{equation}\label{eq:GP_from_transfer_G}
\limsup_{n \to \infty} \frac{1}{n} \log (\mathcal L_{\vp}^n \mathbbm 1_{[a] \times \{e\}})(o,g)
= \PG(\vp,T_\psi).
\end{equation}
To simplify the notation, write $\rho = e^{\PG(\tilde \vp,T_\psi)}$. Define
\[
\eta_{o,g}(t) = \sum_{n=1}^\infty t^{-n} (\mathcal L_{\vp}^n\mathbbm 1_{[a] \times \{e\}})(o,g).
\]
This converges for $t>\rho$ but may either diverge or converge at $t=\rho$. In the latter case, we need to modify the series so that it diverges at $t=\rho$. We use the following standard lemma.

\begin{lemma}[Lemma 3.2 of \cite{DU}]\label{lem:DU}
Let $a_n$ be a sequence of positive real numbers such that $\limsup_{n \to \infty} a_n^{1/n} =\rho$.
Then there is a sequence $b_n$ of positive real numbers such that $\lim_{n \to \infty} b_{n+1}/b_n=1$,
$\sum_{n=1}^\infty a_nb_nt^{-n}$ converges for all $t > \rho$, but 
$\sum_{n=1}^\infty a_nb_n \rho^{-n}$ diverges.
\end{lemma}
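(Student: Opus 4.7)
The plan is to split into two cases according to whether the series $\sum_{n=1}^\infty a_n \rho^{-n}$ already diverges. In the easy case, when this series is infinite, take $b_n \equiv 1$: the ratio condition is trivial; the convergence of $\sum_n a_n t^{-n}$ for $t>\rho$ follows from the root test applied to $\limsup a_n^{1/n}=\rho$ (comparing with a geometric series of ratio $\rho'/t<1$ for some $\rho\in(\rho,t)$); and the divergence at $t=\rho$ is the hypothesis.

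The interesting case is $\sum_n a_n \rho^{-n}<\infty$. Here the goal is to construct a sub-exponential multiplier $b_n$ that amplifies the terms along a subsequence where $a_n^{1/n}$ is nearly $\rho$, so as to force divergence at $t=\rho$ without changing the radius of convergence. Using $\limsup a_n^{1/n}=\rho$, extract $n_1<n_2<\cdots$ and $\epsilon_k\downarrow 0$ with
\[
a_{n_k}\rho^{-n_k} \ge (1-\epsilon_k)^{n_k},
\]
and, by passing to a subsequence, arrange $n_{k+1}$ to grow so quickly relative to $n_k/\epsilon_{k+1}$ that the interpolation below has vanishing slope. Set $b_n=e^{\phi(n)}$, where $\phi:\mathbb{N}\to[0,\infty)$ is the piecewise linear function with $\phi(n_k)=-n_k\log(1-\epsilon_k)$, interpolated linearly on each $[n_k,n_{k+1}]$.

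With this choice one checks the three required properties: on the block $[n_k,n_{k+1}]$ the discrete slope $\phi(n+1)-\phi(n)$ is bounded by roughly $-\log(1-\epsilon_{k+1})\to 0$ plus a term tending to $0$ by the sparsity of the $n_k$, giving $b_{n+1}/b_n=e^{\phi(n+1)-\phi(n)}\to 1$; since $\phi(n_k)/n_k=-\log(1-\epsilon_k)\to 0$ and $\phi$ is piecewise linear, $\phi(n)=o(n)$, hence $b_n^{1/n}\to 1$ and $\sum_n a_n b_n t^{-n}$ converges for every $t>\rho$ by the root test; and along the subsequence,
\[
a_{n_k} b_{n_k} \rho^{-n_k} \ge (1-\epsilon_k)^{n_k}\cdot(1-\epsilon_k)^{-n_k}=1,
\]
so $\sum_n a_n b_n \rho^{-n}=\infty$.

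The main obstacle is the interpolation: one must balance two competing demands on $\phi$, namely that $\phi(n_k)$ be large enough (of order $n_k\epsilon_k$) to make the $n_k$-th term at least $1$, while simultaneously forcing the discrete slope of $\phi$ to tend to $0$. This is precisely where the freedom to thin out the sequence $n_k$ is used: choosing the gaps $n_{k+1}-n_k$ large enough (relative to $n_k$ and to $1/\epsilon_{k+1}$) makes the slope across each block controlled by $\epsilon_{k+1}$, which vanishes in the limit.
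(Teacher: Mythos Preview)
The paper does not prove this lemma; it is quoted verbatim as Lemma~3.2 of Denker--Urba\'nski \cite{DU} and used as a black box. So there is nothing to compare against from the present paper.

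Your argument is correct in outline and fills in the construction well. A few small points to tighten it: (i) it is cleanest to build the sequence \emph{recursively}, choosing at stage $k$ an index $n_k$ larger than a threshold depending on $n_{k-1}$ and $\epsilon_k:=1/k$ such that $a_{n_k}\rho^{-n_k}\ge(1-1/k)^{n_k}$, rather than first extracting a subsequence and then thinning --- this makes the dependency of the ``sparsity'' condition on $\epsilon_{k+1}$ transparent; (ii) you should say what $\phi$ is for $n<n_1$ (e.g.\ $\phi\equiv\phi(n_1)$ there); (iii) the verification that $\phi(n)=o(n)$ is not just ``$\phi(n_k)/n_k\to0$ and $\phi$ is piecewise linear'' --- you also need that the slopes $s_k\to0$, since for $n\in[n_k,n_{k+1}]$ one has $\phi(n)/n=\phi(n_k)/n+(1-n_k/n)s_k$. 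You have all the ingredients for this, but it should be stated.
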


Let $b_n$ be the sequence given by the lemma corresponding
to 
\[
a_n = (\mathcal L_{\vp}^n\mathbbm 1_{[a] \times \{e\}})(o,e)
\] 
and write
$c_n=1/b_n$. Let
\[
\zeta_{o,g}(t) 
= \sum_{n=1}^\infty t^{-n} \frac{1}{c_n} (\mathcal L_{\vp}^n\mathbbm 1_{[a] \times \{e\}})(o,g).
\]
If  $o' \in [a]$ then the is a natural bijection between the pre-images
of $o$ and $o'$, so we can compare $\zeta_{o,g}(t)$ and $\zeta_{o',g}(t)$ term-by-term. 
Using Lemma \ref{lem:bounded_variation}, for all $g \in G$ and $t >\rho$,
\begin{equation}\label{eq:bounds_for_zeta}
e^{-B}  \zeta_{o,g}(t) \le \zeta_{o',g}(t) \le e^B \zeta_{o,g}(t)
\end{equation}
and so the same sequence 
$c_n$ works for all  $o \in [a]$.

\begin{lemma}
Let $\rho'>\rho$.
For each $a \in S$ and each $g \in G$,
\[
0 < \inf_{o \in [a]} \inf_{\rho < t \le \rho'} \frac{\zeta_{o,g}(t)}{\zeta_{o,e}(t)} 
\le \sup_{o \in [a]} \sup_{\rho < t \le \rho'} \frac{\zeta_{o,g}(t)}{\zeta_{o,e}(t)} < \infty.
\]
\end{lemma}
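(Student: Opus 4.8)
The plan is to use the given transitivity of $T_\psi$ to produce, for each fixed $g\in G$ and each $a\in S$, "connecting words" in the skew product from $[a]\times\{e\}$ to $[a]\times\{g\}$ and back, and then to insert these connecting words into the periodic orbits being counted in $\zeta_{o,g}$ and $\zeta_{o,e}$ to get term-by-term comparisons (up to a shift of the index $n$ and bounded distortion of the Birkhoff sums coming from Lemma \ref{lem:bounded_variation}). The ratio $b_{n+k}/b_n \to 1$ from Lemma \ref{lem:DU}, equivalently $c_n/c_{n+k}\to1$, then absorbs the index shift, and the conclusion will be a pair of inequalities of the shape $\zeta_{o,g}(t) \le C_g\, t^{-k}\,\zeta_{o,e}(t)$ and $\zeta_{o,e}(t)\le C_{g}'\, t^{-k'}\,\zeta_{o,g}(t)$ valid for all $\rho<t\le\rho'$ and all $o\in[a]$, with $C_g,C_g',k,k'$ depending on $a$ and $g$ only. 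Dividing gives the uniform two-sided bound on $\zeta_{o,g}/\zeta_{o,e}$; the extra $t^{-k}$ and $t^{-k'}$ factors are bounded above and below for $t$ in the compact interval $(\rho,\rho']$ (they lie between $(\rho')^{-k}$ and $\rho^{-k}$, and similarly for $k'$), so they do not spoil finiteness or positivity.

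In more detail: since $T_\psi$ is topologically transitive, for each $a\in S$ and $g\in G$ there exist an integer $k=k(a,g)\ge1$ and an allowed word $u$ in the base of length $k-1$ such that $a u a \in \mathcal W$ and $\psi_{k}(aua\cdot) = g$ along the corresponding cylinder; and likewise there exist $k'=k'(a,g)$ and $u'$ with $\psi_{k'}(au'a\cdot)=g^{-1}$. (Topological transitivity of the skew product means some orbit is dense, so in particular one can travel from a neighbourhood of $(a,e)$-type states to $(a,g)$-type states through an admissible path, which is exactly such a word; alternatively one uses that $(\Sigma\times G,T_\psi)$ is an irreducible countable-state Markov shift.) Now take any $x$ contributing to the $n$-th term of $\zeta_{o,g}$, i.e.\ $\sigma^n x = o$, $x_1=a$, $\psi_n(x)=g$. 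Prepending the word $u'a$-block (choosing an appropriate element of the cylinder $[au']$ whose shift lands on $x$, which is possible because $x_1=a$ and $au'a\in\mathcal W$) produces a point $x'$ with $\sigma^{n+k'}x' = o$, $x'_1 = a$, and $\psi_{n+k'}(x') = g^{-1}g = e$, hence $x'$ contributes to the $(n+k')$-th term of $\zeta_{o,e}$. Distinct $x$ give distinct $x'$, and $\vp^{n+k'}(x') = \vp^{k'}(x') + \vp^n(\sigma^{k'}x')$, where the first summand is bounded (local H\"older continuity, Lemma \ref{lem:bounded_variation}) and the second equals $\vp^n$ evaluated at a pre-image of $o$ lying in $[a]$, hence by Lemma \ref{lem:bounded_variation} differs from $\vp^n(x)$ by at most $B$. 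Summing over the $n$-th term and then over $n$, with the weight $t^{-(n+k')}/c_{n+k'}$ against $t^{-n}/c_n$, and using $\sup_n c_n/c_{n+k'} < \infty$ (from $c_{n+1}/c_n\to1$), gives $\zeta_{o,g}(t) \le C\, t^{-k'}\,\zeta_{o,e}(t)$ for all $o\in[a]$ and $\rho<t\le\rho'$, with $C=C(a,g,\rho')$. The reverse inequality is obtained symmetrically using the word $u$ realizing $g$, giving $\zeta_{o,e}(t)\le C'\, t^{-k}\,\zeta_{o,g}(t)$.

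Combining, $0< (C')^{-1}(\rho')^{k} \le \zeta_{o,g}(t)/\zeta_{o,e}(t) \le C\,\rho^{-k'} < \infty$ uniformly in $o\in[a]$ and $t\in(\rho,\rho']$, which is the claim. (One should also note $\zeta_{o,e}(t)>0$: its first term $t^{-1}c_1^{-1}(\mathcal L_\vp \mathbbm 1_{[a]\times\{e\}})(o,e)$ is strictly positive since $o\in[a]$ forces a pre-image $y$ of $o$ with $y_1=a$ — this uses topological transitivity of $\sigma$, hence of the base — so the denominator never vanishes.) The main obstacle I expect is the bookkeeping around the index shift: one must check that $c_n/c_{n+k}$ is bounded uniformly in $n$ for fixed $k$, which follows from $c_{n+1}/c_n\to1$ by telescoping a finite product, and that the estimates are genuinely uniform in $o$ — but this uniformity is precisely what \eqref{eq:bounds_for_zeta} and Lemma \ref{lem:bounded_variation} are set up to deliver, so the argument should go through cleanly. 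A secondary point requiring care is verifying that the connecting words can always be chosen starting and ending at the same state $a$ so that concatenation stays admissible; this is where topological transitivity of $T_\psi$ (not just of $\sigma$) is used in an essential way.
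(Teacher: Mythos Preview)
Your argument is correct and follows essentially the same route as the paper: use transitivity of $T_\psi$ to produce a connecting word of some fixed length $k$, insert it to compare the $n$th term of one series with the $(n+k)$th term of the other, and absorb the index shift via $c_{n}/c_{n+k}\to 1$; the only cosmetic difference is that you \emph{prepend} the connecting word (keeping the base point $o$ fixed) whereas the paper \emph{appends} it (landing at a nearby $z\in[a]$ and then invoking \eqref{eq:bounds_for_zeta}). Two harmless slips to tidy up: the factor you pick up is $t^{k'}$ rather than $t^{-k'}$ (and correspondingly you need $\sup_n c_{n+k'}/c_n<\infty$, which holds for the same reason), and your justification that $\zeta_{o,e}(t)>0$ via the $n=1$ term fails in general since $\psi(a)$ need not equal $e$ --- but positivity follows anyway from transitivity of $T_\psi$, which guarantees some term is nonzero.
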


\begin{proof}
We begin by observing that, for every $g,h \in G$, we have
\begin{align*}
(\mathcal L_{\vp}^{n+k}\mathbbm 1_{[a] \times \{e\}})(o,g)
&\ge
\sum_{\substack{\sigma^kz=o \\z_1=a\\ \psi_k(z)=h^{-1}g}} e^{\vp^k(z)}
\sum_{\substack{\sigma^nx=z \\x_1=a\\ \psi_n(x)= h}} e^{\vp^n(x)}
\\
&=
\sum_{\substack{\sigma^kz=o \\z_1=a \\ \psi_k(z)=h^{-1}g}} e^{\vp^k(z)}
[(\mathcal L_\vp^n\mathbbm 1_{[a] \times \{e\}})(z,h)]
\end{align*}
and (due to the transitivity of $T_\psi$) we can find a $k \ge 1$ so that the right
hand side is positive. Thus we obtain
\begin{align*}
\zeta_{o,g}(t) 
&= 
\sum_{m=1}^k \frac{t^{-n}}{c_n} (\mathcal L_\vp^m \mathbbm 1_{[a] \times
\{e\}})(o,g)
+ \sum_{n=1}^\infty \frac{t^{-(n+k)}}{c_{n+k}} 
(\mathcal L_{\vp}^{n+k}\mathbbm 1_{[a] \times \{e\}})(o,g)
\\
&\ge 
\sum_{m=1}^k \frac{t^{-n}}{c_n} (\mathcal L_\vp^m \mathbbm 1_{[a] \times
\{e\}})(o,g)
\\
&+
t^{-k}\sum_{\substack{\sigma^kz=o \\z_1=a\\ \psi_k(z)=h^{-1}g}} e^{\vp^k(z)}
\sum_{n=1}^\infty \frac{t^{-n}}{c_n} \frac{c_n}{c_{n+k}}
[(\mathcal L_\vp^n\mathbbm 1_{[a] \times \{e\}})(z,h)].
\end{align*}
Since the numbers $c_n$ are positive and, for each fixed $k$, 
$\lim_{n \to \infty} c_n/c_{n+k}=1$, we have
$\inf_{n \ge 1} c_n/c_{n+k} >0$.
Furthermore,
\[
\inf_{o \in [a]} \sum_{\substack{\sigma^kz=o \\z_1=a\\ \psi_k(z) = h^{-1}g}} e^{\vp^k(z)} >0.
\]
Hence, for $t \in (\rho,\rho']$, we have
\[
\zeta_{o,g}(t) \ge C_1(g,k,a) + \frac{C_2(g,h,k,a)}{C_3(k)}\zeta_{o,h}(t),
\]
for positive $C_1,C_2$ and $C_3$.
We conclude that
\[
\inf_{\rho <t\le \rho'} \frac{\zeta_{o,g}(t)}{\zeta_{o,h}(t)} >0.
\]
Since $g$ and $h$ are arbitrary, we have 
\[
0 <  \inf_{\rho < t \le \rho'} \frac{\zeta_{o,g}(t)}{\zeta_{o,e}(t)} 
\le  \sup_{\rho < t \le \rho'} \frac{\zeta_{o,g}(t)}{\zeta_{o,e}(t)} < \infty.
\]
Using (\ref{eq:bounds_for_zeta}), the bounds can be made uniform over $o \in [a]$.
\end{proof}

Define 
\[
\mathcal Z = \bigcup_{n=0}^\infty \{z\in \Sigma \hbox{ : } \sigma^nz=o\}
\]
the set of pre-images of $o$;
this is a 
countable subset of $\Sigma$.
By a standard diagonal argument, we can find a sequence $t_n \downarrow \rho$ such that
the following limit exists and lies in $(0,\infty)$ for all $(z,g) \in \mathcal Z \times G$:
\[
\Phi(z,g) := \lim_{n \to \infty} \frac{\zeta_{z,g}(t_n)}{\zeta_{z,e}(t_n)}.
\]
Furthermore, for all $g \in G$,
\begin{equation}\label{eq:bounds_for_Phi}
0 < \inf_{z\in \mathcal Z \cap [a]} \Phi(z,g) \le \sup_{z\in \mathcal Z \cap [a]} \Phi(z,g) <\infty.
\end{equation}

\begin{lemma}\label{lem:eigenvalue_equation}
For all $z \in \mathcal Z \cap [a]$ and all $n \ge 1$,
\[
(\mathcal L_{\vp}^n\Phi)(z,g) = \rho^n\Phi(z,g).
\]
\end{lemma}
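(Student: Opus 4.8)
The plan is to show that $\Phi$ is a (positive) eigenfunction of $\mathcal L_\vp$ with eigenvalue $\rho$ by passing to the limit along the sequence $t_n\downarrow\rho$ in an approximate eigenvalue equation satisfied by the normalized generating functions $\zeta_{z,g}(t)/\zeta_{z,e}(t)$. The starting observation is a recursion for $\zeta_{z,g}$: using that $(\mathcal L_\vp^{m+1}\mathbbm 1_{[a]\times\{e\}})(z,g)=(\mathcal L_\vp\,[(\mathcal L_\vp^m\mathbbm 1_{[a]\times\{e\}})])(z,g)=\sum_{\sigma y=z}e^{\vp(y)}(\mathcal L_\vp^m\mathbbm 1_{[a]\times\{e\}})(y,g\psi(y)^{-1})$, one gets, after multiplying by $t^{-(m+1)}/c_{m+1}$ and summing over $m\ge 0$,
\[
\zeta_{z,g}(t) \;=\; t^{-1}\sum_{\sigma y = z} e^{\vp(y)}\,\frac{1}{c_1}\mathbbm 1_{[a]\times\{e\}}(y,g\psi(y)^{-1})
\;+\; t^{-1}\sum_{\sigma y=z} e^{\vp(y)}\sum_{n=1}^\infty \frac{t^{-n}}{c_{n+1}}(\mathcal L_\vp^n\mathbbm 1_{[a]\times\{e\}})(y,g\psi(y)^{-1}).
\]
Writing $\gamma_{n,1} := c_n/c_{n+1}$ (which tends to $1$), the second term is $t^{-1}\sum_{\sigma y=z}e^{\vp(y)}\sum_n (t^{-n}/c_n)\gamma_{n,1}(\mathcal L_\vp^n\mathbbm 1_{[a]\times\{e\}})(y,g\psi(y)^{-1})$. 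Iterating the recursion $n$ times (or rather, establishing the analogous identity with $t^{-n}$, $c_n/c_{n+k}$ in place of $t^{-1}$, $c_n/c_{n+1}$) one obtains an $n$-step version; but it is cleaner to prove the $n=1$ case first and then bootstrap by induction, since $(\mathcal L_\vp^{n}\Phi)(z,g)=(\mathcal L_\vp^{n-1}(\mathcal L_\vp\Phi))(z,g)=\rho\,(\mathcal L_\vp^{n-1}\Phi)(z,g)$ once the $n=1$ identity is known (noting $\mathcal L_\vp\Phi$ is again defined and positive on $\mathcal Z\cap[a]\times G$ by the uniform bounds (\ref{eq:bounds_for_Phi}) and summability of $\vp$).

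For the $n=1$ case, I divide the recursion above through by $\zeta_{z,e}(t)$ and evaluate along $t=t_n$. On the left we get $\zeta_{z,g}(t_n)/\zeta_{z,e}(t_n)\to\Phi(z,g)$. On the right, the first (finite-range, order $t^{-1}$) term tends to $0$ since $\zeta_{z,e}(t_n)\to\infty$ — here I use that $b_n$ was chosen via Lemma \ref{lem:DU} precisely so that $\zeta_{o,e}(\rho)=\sum a_n b_n \rho^{-n}$ diverges, hence $\zeta_{z,e}(t_n)\to\infty$ for $z$ with $\sigma^kz = o$ by the term-by-term comparison (\ref{eq:bounds_for_zeta})-type argument extended to $\mathcal Z$. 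For the main term I write
\[
\frac{t_n^{-1}}{\zeta_{z,e}(t_n)}\sum_{\sigma y=z} e^{\vp(y)}\sum_{j=1}^\infty \frac{t_n^{-j}}{c_j}\,\frac{c_j}{c_{j+1}}(\mathcal L_\vp^j\mathbbm 1_{[a]\times\{e\}})(y,g\psi(y)^{-1})
\]
and I want this to converge to $\rho^{-1}\sum_{\sigma y=z}e^{\vp(y)}\Phi(y,g\psi(y)^{-1})=\rho^{-1}(\mathcal L_\vp\Phi)(z,g)$. Fix $y$ with $\sigma y=z$ and $y\in[a']$ for the appropriate state $a'$ (so $y\in\mathcal Z\cap[a']$); then
\[
\frac{\sum_j (t_n^{-j}/c_j)(c_j/c_{j+1})(\mathcal L_\vp^j\mathbbm 1_{[a']\times\{e\}})(y,g\psi(y)^{-1})}{\zeta_{z,e}(t_n)}
= \frac{\zeta_{z,e}(t_n)^{-1}\,\zeta_{y,g\psi(y)^{-1}}(t_n)\,(1+o(1))}{1}\cdot\frac{\zeta_{y,e}(t_n)}{\zeta_{y,e}(t_n)}
\]
— more precisely I factor out $\zeta_{y,e}(t_n)$, use $\zeta_{y,g\psi(y)^{-1}}(t_n)/\zeta_{y,e}(t_n)\to\Phi(y,g\psi(y)^{-1})$, and the $c_j/c_{j+1}$ factor is absorbed since it is $1+o(1)$ uniformly and $\sup_j c_j/c_{j+1}<\infty$. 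The ratio $\zeta_{y,e}(t_n)/\zeta_{z,e}(t_n)$ is exactly the quantity that must produce the $t_n^{-1}\sum_{\sigma y=z}e^{\vp(y)}$ factor: summing the recursion identity at $g=e$ shows $\zeta_{z,e}(t)= t^{-1}\sum_{\sigma y=z}e^{\vp(y)}\zeta_{y,e}(t)(1+o(1)) + O(1)$, so $\sum_{\sigma y=z}e^{\vp(y)}\,\zeta_{y,e}(t_n)/\zeta_{z,e}(t_n)\to t_n^{-1}$... wait, rather $\to \rho$, i.e. $\zeta_{z,e}(t_n)\sim \rho^{-1}\sum_{\sigma y=z}e^{\vp(y)}\zeta_{y,e}(t_n)$ in the limit. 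Combining these, and justifying the interchange of the $\sum_{\sigma y=z}$ with $\lim_n$ via dominated convergence (using summability of $\vp$ together with the uniform upper bound on $\Phi$ from (\ref{eq:bounds_for_Phi})), gives $\Phi(z,g)=\rho^{-1}(\mathcal L_\vp\Phi)(z,g)$, i.e. $(\mathcal L_\vp\Phi)(z,g)=\rho\Phi(z,g)$.

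The main obstacle I anticipate is controlling the two competing limits — the divergence of $\zeta_{z,e}(t_n)$ as $t_n\downarrow\rho$ against the convergence of the ratios — simultaneously and uniformly enough to (a) kill the lower-order boundary term, (b) replace each $c_j/c_{j+1}$ by $1$, and (c) interchange the (infinite, if $S$ is infinite) sum over preimages $y$ with the limit in $n$. Step (c) is where summability of $\vp$ and the uniform bounds (\ref{eq:bounds_for_Phi}) on $\Phi$ over each $\mathcal Z\cap[a]$ are essential, since they give an integrable dominating function $e^{\vp(y)}\cdot(\text{const depending only on the state of }y)$ with finite total mass $\|\mathcal L_\vp 1\|_\infty<\infty$. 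A secondary technical point is making the asymptotic $\zeta_{z,e}(t_n)\sim \rho^{-1}\sum_{\sigma y=z}e^{\vp(y)}\zeta_{y,e}(t_n)$ rigorous; this follows from the same recursion applied at $g=e$ together with the divergence of $\zeta_{y,e}$ and the same dominated-convergence bookkeeping, so it does not require new ideas, only care.
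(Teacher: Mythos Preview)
Your overall approach is the same as the paper's: both derive the eigenvalue equation by writing the transfer-operator recursion for the generating functions $\zeta_{z,g}(t)$, dividing by the diverging normalizer, and passing to the limit along $t_m\downarrow\rho$. The paper carries this out directly for each iterate $k$ via an $\epsilon$-sandwich (from $|c_{n-k}/c_n-1|<\epsilon$ for $n\ge n_0$), obtaining the two one-sided inequalities $(\mathcal L_\vp^k\Phi)\le\rho^k\Phi$ and $\rho^k\Phi\le(\mathcal L_\vp^k\Phi)$ separately and then letting $\epsilon\to 0$. Your $n=1$ case plus induction is an acceptable reorganization, provided you establish the base case on all of $\mathcal Z$ rather than only on $\mathcal Z\cap[a]$, since the preimages of $z\in\mathcal Z\cap[a]$ need not lie in $[a]$.

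The substantive gap is your appeal to summability of $\vp$. You invoke $\|\mathcal L_\vp 1\|_\infty<\infty$ to run dominated convergence for the interchange of the limit $t_m\downarrow\rho$ with the (possibly infinite) sum over preimages $\sigma y=z$. But Lemma~\ref{lem:eigenvalue_equation} sits inside the proof of Theorem~\ref{th:main}, whose only hypothesis on $\vp$ is local H\"older continuity---summability is \emph{not} assumed, and need not hold even when $\PG(\vp,T_\psi)<\infty$. Your proposed dominating function $e^{\vp(y)}\cdot(\text{cylinder-dependent constant})$ is therefore unavailable. The paper sidesteps dominated convergence entirely by treating the two inequalities asymmetrically: for $\rho^k\Phi(o,g)\le(1+\epsilon)(\mathcal L_\vp^k\Phi)(o,g)$ the right-hand side is an upper bound, so no interchange is needed (the inequality is vacuous if the sum diverges); for $(1-\epsilon)(\mathcal L_\vp^k\Phi)(o,g)\le\rho^k\Phi(o,g)$ one has a sum of non-negative terms, so Fatou suffices. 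You should restructure your limit step along these lines rather than relying on a summability hypothesis that the theorem does not grant.
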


\begin{proof}
To simplify notation, we shall write the proof for the case $z=o$ and note that, using
(\ref{eq:bounds_for_zeta}), the same argument holds for general $z \in \mathcal Z \cap [a]$.
Fix $k \ge 1$ and let $\epsilon>0$. Since
$\lim_{n \to \infty} c_{n-k}/c_n=1$, we can choose $n_0$ such that
$1-\epsilon \le c_{n-k}/c_h \le 1+\epsilon$, for all 
$n \ge n_0$.
We will also use that
\[
(\mathcal L_\vp^n \mathbbm 1_{[a] \times \{e\}})(o,g)
=
\sum_{\sigma^k z=o}
e^{\vp^k(z)} 
\,
(\mathcal L_\vp^{n-k} \mathbbm 1_{[a] \times \{e\}})(z,g\psi_k(z)^{-1}).
\]
Then, for $n \ge n_0$,
\begin{align*}
&\frac{1-\epsilon}{c_{n-k}}
\sum_{\sigma^k z=o}
e^{\vp^k(z)} 
\,
(\mathcal L_\vp^{n-k} \mathbbm 1_{[a] \times \{e\}})(z,g\psi_k(z)^{-1})
\le
\frac{1}{c_n} (\mathcal L_\vp^n \mathbbm 1_{[a] \times \{e\}})(o,g)
\\
&\le
\frac{1+\epsilon}{c_{n-k}}
\sum_{\sigma^k z=o}
e^{\vp^k(z)} 
\,
(\mathcal L_\vp^{n-k} \mathbbm 1_{[a] \times \{e\}})(z,g\psi_k(z)^{-1}).
\end{align*}
Setting
\[
C_1(g,t,n_0) = \sum_{n=1}^{n_0} 
\frac{t^{-n}}{c_n} (\mathcal L_\vp^n \mathbbm 1_{[a] \times \{e\}})(o,g),
\]
we have
\begin{align*}
&t^k \sum_{n=1}^\infty \frac{t^{-n}}{c_n} (\mathcal L_\vp^n \mathbbm 1_{[a] \times \{e\}})(o,g)
\le
C_1(g,t,n_0) 
\\
&+ t^k(1+\epsilon) \sum_{\sigma^kz=o}
e^{\vp^k(z)}
\,
\sum_{n=n_0+1}^\infty \frac{t^{-n}}{c_{n-k}}(\mathcal L_\vp^{n-k} \mathbbm 1_{[a] \times \{e\}})(z,g\psi_k(z)^{-1}) 
\\
&\le
C_1(g,t,n_0) + (1+\epsilon) \sum_{\sigma^kz=o} e^{\vp^k(z)}
\Phi(z,g\psi_k(z)^{-1}) \zeta_{o,e}(t).
\end{align*}
This gives
\begin{align*}
\rho^k \Phi(o,g) &\le \lim_{m\to \infty} \frac{C_1(g,t_m,n_0)}{\zeta_e(t_m)}
+(1+\epsilon) \sum_{\sigma^kz=o} e^{\vp^k(z)}
\Phi(z, g\psi_k(z)^{-1})
\\
&= (1+\epsilon) \sum_{\sigma^kz=o} e^{\vp^k(z)}
\Phi(z, g\psi_k(z)^{-1})
\end{align*}
and, since $\epsilon$ is arbitrary,
\begin{align*}
\rho^k \Phi(o,g) \le  \sum_{\sigma^kz=o} e^{\vp^k(z)}
\Phi(z, g\psi_k(z)^{-1})
\end{align*}

For the lower bound, we have 
\begin{align*}
&t^k \sum_{n=1}^\infty \frac{t^{-n}}{c_n}
(\mathcal L_\vp^n \mathbbm 1_{[a] \times \{e\}})(o,g)
\\
&\ge (1-\epsilon)\sum_{\sigma^k z=o} e^{\vp^k(z)}
\sum_{n=n_0+1}^\infty 
\frac{t^{-(n-k)}}{c_{n-k}}
(\mathcal L_\vp^{n-k} \mathbbm 1_{[a] \times \{e\}})(z,g\psi_k(z)^{-1})
\\
&=
(1-\epsilon) \sum_{\sigma^k z=o} e^{\vp^k(z)}
\sum_{n=n_0+k+1}^\infty 
\frac{t^{-n}}{c_{n}}
(\mathcal L_\vp^{n} \mathbbm 1_{[a] \times \{e\}})(z,g\psi_k(z)^{-1}),
\end{align*}
which gives
\[
\rho^k\Phi(o,g) \ge (\mathcal L_\vp^k \Phi)(o,g).
\]
\end{proof}

\begin{corollary}\label{cor:gamma_bounded}
For all $(z,\gamma) \in \mathcal Z \times G$, we have
\[
0 < \inf_{g \in G} \frac{\Phi(z, g\gamma^{-1})}{\Phi(o,g)} \le
\sup_{g \in G} \frac{\Phi(z,g\gamma^{-1})}{\Phi(o,g)} < 
\infty.
\]
\end{corollary}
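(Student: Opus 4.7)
The plan is to reduce everything to a \emph{shift invariance} statement for $\Phi(o,\cdot)$: namely, that for each fixed $h \in G$ there exist constants $0 < c(h) \le C(h) < \infty$ with
\[
c(h) \le \frac{\Phi(o,gh)}{\Phi(o,g)} \le C(h) \quad \text{for every } g \in G.
\]
Granted this, the corollary will follow by applying the eigenvalue equation of Lemma \ref{lem:eigenvalue_equation} in the two available directions: at the base point $o$ to control $\Phi(z,\cdot)$ from above by $\Phi(o,\cdot)$, and at $z$ itself to control it from below.

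For the shift invariance, I would apply Lemma \ref{lem:eigenvalue_equation} at $(o,g)$ and, exactly as in the proof of the previous lemma, invoke the topological transitivity of $T_\psi$ to produce $k=k(h) \ge 1$ and a pre-image $y^* \in \mathcal{Z}\cap[a]$ of $o$ at level $k$ satisfying $\psi_k(y^*)=h^{-1}$. Retaining only this nonnegative term in the eigenvalue identity gives
\[
\rho^k\Phi(o,g) \ge e^{\vp^k(y^*)}\Phi(y^*,gh) \ge e^{\vp^k(y^*)-B}\,\Phi(o,gh),
\]
where the last step uses the uniform comparison (\ref{eq:bounds_for_Phi}). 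This yields the upper bound with $C(h)=\rho^k e^{-\vp^k(y^*)+B}$, uniform in $g$. The lower bound then follows by applying the one-sided estimate with $h$ replaced by $h^{-1}$ and substituting $g \mapsto gh$.

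With shift invariance in hand, fix $(z,\gamma) \in \mathcal{Z}\times G$ and let $N\ge 0$ satisfy $\sigma^N z = o$. Applying Lemma \ref{lem:eigenvalue_equation} at $(o,g\gamma^{-1}\psi_N(z))$ at level $N$ and keeping only the $z$-summand,
\[
\Phi(z,g\gamma^{-1}) \le \rho^N e^{-\vp^N(z)}\,\Phi(o,g\gamma^{-1}\psi_N(z)) \le \rho^N e^{-\vp^N(z)}\,C(\gamma^{-1}\psi_N(z))\,\Phi(o,g),
\]
which is the uniform upper bound. For the matching lower bound, I would apply Lemma \ref{lem:eigenvalue_equation} at $(z,g\gamma^{-1})$ at some level $k$, retain the single term corresponding to a pre-image $y^* \in \mathcal{Z}\cap[a]$ of $z$ (such a $y^*$ exists by the transitivity of $\sigma$, inherited from $T_\psi$), use (\ref{eq:bounds_for_Phi}) to pass from $\Phi(y^*,\cdot)$ to $\Phi(o,\cdot)$, and finally apply shift invariance once more to return the group argument to $g$.

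The main obstacle is the shift invariance step itself, where the eigenvalue equation, the transitivity of the skew product, and the cylinder-wise Harnack bound (\ref{eq:bounds_for_Phi}) all have to be combined into a single uniform estimate. Once this is established, the rest of the argument amounts to two applications of the eigenvalue equation -- once at $o$ and once at $z$ -- together with elementary bookkeeping of group elements.
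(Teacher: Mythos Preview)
Your argument is correct and uses the same three ingredients as the paper --- the eigenvalue identity of Lemma~\ref{lem:eigenvalue_equation}, transitivity of $T_\psi$ to produce preimages with prescribed $\psi$-value, and the Harnack-type comparison over $\mathcal Z \cap [a]$ coming from (\ref{eq:bounds_for_zeta}) --- but packages them differently. You first isolate the case $z=o$ as a separate shift-invariance lemma, then bootstrap to arbitrary $z$ by combining the tautological relation $\sigma^N z=o$ (upper bound) or a generic preimage in $[a]$ (lower bound) with a second application of shift invariance to realign the group coordinate. The paper instead handles general $(z,\gamma)$ in a single pass: for the upper bound it selects $k$ so that both $\sigma^k z\in[a]$ \emph{and} $\psi_k(z)=\gamma$ hold simultaneously, and for the lower bound it selects a preimage $x\in[a]$ of $z$ with $\psi_k(x)=\gamma^{-1}$, so in each case the group argument collapses directly to $g$ with no residual shift to undo. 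The paper's route is shorter (one eigenvalue application per inequality, no auxiliary lemma); yours is a bit more modular and, for the upper bound, has the mild advantage of relying only on $\sigma^N z=o$ rather than on the forward $T_\psi$-orbit of $(z,e)$ visiting $[a]\times\{\gamma\}$. Note that your lower bound, like the paper's, invokes the eigenvalue identity at a point $z$ that need not lie in $[a]$, which is a slight extension of Lemma~\ref{lem:eigenvalue_equation} as stated.
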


\begin{proof}
Here we again use the topological transitivity of $T_\psi$.
Given $(z,\gamma) \in \mathcal Z \times G$, we can find $k \ge 1$ 
such that $\sigma^kz=o'\in [a]$ and $\psi_k(z)=\gamma$. This gives us
\begin{align*}
e^{\vp^k(z)}\Phi(z,g\gamma^{-1}) 
&\le
\sum_{\sigma^ky=o'} e^{\vp^k(y)} \Phi(y,g\psi_k(y)^{-1}) \\
&=\rho^k\Phi(o',g) \le e^{2B} \rho^k \Phi(o,g).
\end{align*}
Hence,
$\sup_{g \in G} \Phi(z,g\gamma^{-1})/\Phi(o,g)$ is finite.

Next we find $x \in [a]$ and $k \ge 1$ such that $\sigma^kx = z$ and $\psi_k(x)=\gamma^{-1}$.
Then
\begin{align*}
e^{-2B}\e^{\vp^k(x)}\Phi(o,g) 
&\le
e^{\vp^k(x)} \Phi(x,g) 
\\
&\le \sum_{\sigma^k y=o'} e^{\vp^k(y)} \Phi(y,g\gamma^{-1}\psi_k(y)^{-1})
= \rho^k\Phi(z,g\gamma^{-1}).
\end{align*}
Hence,
$\inf_{g \in G} \Phi(z,g\gamma^{-1})/\Phi(o,g)$ is positive.
\end{proof}

We are now able to use the amenability of $G$ to replace $\Phi$ with a function $\phi : \mathcal Z \times G \to \R^{>0}$ which, after normalization, is a homomorphism in the second factor. This will allow us to descend to $\oG$.

\begin{lemma}\label{lem:inequality_for_phi}
There is a function $\phi : \mathcal Z \times G \to \mathbb R^{>0}$ such that
\begin{enumerate}
\item
for all $g \in G$ and $n \ge 1$,
\[
(\mathcal L_{\vp}^n \phi)(o,g) \le \rho^n \phi(o,g),
\]
\item
for all $z \in \mathcal Z$, the function
\[
\gamma \mapsto \frac{\phi(z,\gamma)}{\phi(z,e)}
\]
is a homomorphism from $G$ to $\mathbb R^{>0}$.
\end{enumerate}
Furthermore, for all $a \in S$ and all $\gamma \in G$, we have
\[
\inf_{z \in \mathcal Z \cap [a]} \phi(z,\gamma) >0.
\]
\end{lemma}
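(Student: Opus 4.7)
The plan is to use the amenability of $G$, via geometric averaging along a Følner sequence, to replace $\Phi$ by a function that factors as $\phi(z,\gamma) = \phi(z,e)\tilde\chi(\gamma)$, where $\tilde\chi : G \to \mathbb R^{>0}$ is a homomorphism. First, fix a right-Følner sequence $(F_m)_{m\ge 1}$ in $G$ and set
\[
\phi_m(z,\gamma) := \exp\!\left(\frac{1}{|F_m|}\sum_{g \in F_m} \log \frac{\Phi(z,g\gamma)}{\Phi(o,g)}\right).
\]
By Corollary \ref{cor:gamma_bounded}, for each fixed $(z,\gamma)$ the ratio $\Phi(z,g\gamma)/\Phi(o,g)$ is bounded away from $0$ and $\infty$ uniformly in $g$, so each $\phi_m(z,\gamma)$ lies in a compact subset of $(0,\infty)$ depending only on $(z,\gamma)$. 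Since $\mathcal Z \times G$ is countable, a diagonal argument yields a subsequence $(m_k)$ along which $\phi(z,\gamma) := \lim_k \phi_{m_k}(z,\gamma) \in (0,\infty)$ exists for every $(z,\gamma)$.

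For the homomorphism property (2), I would start from the cocycle identity
\[
\log\frac{\Phi(o,g\gamma_1\gamma_2)}{\Phi(o,g)} = \log\frac{\Phi(o,g\gamma_1\gamma_2)}{\Phi(o,g\gamma_1)} + \log\frac{\Phi(o,g\gamma_1)}{\Phi(o,g)},
\]
average over $g \in F_m$, and reindex the first sum via $g' = g\gamma_1$. It then becomes the same expression that appears in $\log\phi_m(o,\gamma_2)$ but summed over $F_m\gamma_1$ in place of $F_m$; since $|F_m\gamma_1 \triangle F_m|/|F_m| \to 0$ and the integrand is uniformly bounded by Corollary \ref{cor:gamma_bounded}, the two averages agree in the limit. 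This proves that $\tilde\chi(\gamma) := \phi(o,\gamma)$ satisfies $\tilde\chi(\gamma_1\gamma_2) = \tilde\chi(\gamma_1)\tilde\chi(\gamma_2)$; a parallel telescoping, in which the $z$-dependence is absorbed into a boundary term involving $\phi(z,e)$, shows that $\phi(z,\gamma)/\phi(z,e) = \tilde\chi(\gamma)$ for every $z$. The uniform positive lower bound on $\phi(z,\gamma)$ for $z \in \mathcal Z \cap [a]$ comes from (\ref{eq:bounds_for_zeta}): on passing to the limit in $t$ it gives $e^{-2B} \le \Phi(z,g)/\Phi(o,g) \le e^{2B}$ uniformly in $g$, so $\phi(z,e) \in [e^{-2B},e^{2B}]$ and $\inf_{z \in \mathcal Z \cap [a]}\phi(z,\gamma) \ge e^{-2B}\tilde\chi(\gamma) > 0$.

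The main obstacle is the subinvariance (1), which I would establish by an AM--GM argument. Dividing the eigenvalue equation of Lemma \ref{lem:eigenvalue_equation} by $\Phi(o,g)$ gives the pointwise identity $\sum_{\sigma^n y = o} a_y(g) = \rho^n$ for every $g \in G$, where
\[
a_y(g) := e^{\vp^n(y)}\frac{\Phi(y,g\psi_n(y)^{-1})}{\Phi(o,g)}.
\]
By construction, the geometric mean of $a_y(\cdot)$ over $F_m$ equals $e^{\vp^n(y)}\phi_m(y,\psi_n(y)^{-1})$, which along the subsequence tends to $e^{\vp^n(y)}\phi(y,e)\tilde\chi(\psi_n(y))^{-1}$. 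For each finite subset $Y$ of preimages, AM--GM and the pointwise identity give
\[
\sum_{y \in Y} e^{\vp^n(y)}\phi_{m_k}(y,\psi_n(y)^{-1}) \le \frac{1}{|F_{m_k}|}\sum_{g \in F_{m_k}}\sum_{y \in Y} a_y(g) \le \rho^n.
\]
Letting $k \to \infty$ (finite sums commute with the limit) and then exhausting by finite $Y$ (monotone convergence for positive series) yields $\sum_y e^{\vp^n(y)}\phi(y,e)\tilde\chi(\psi_n(y))^{-1} \le \rho^n$. Multiplying through by $\tilde\chi(g)$ and using $\phi(y,g\psi_n(y)^{-1}) = \phi(y,e)\tilde\chi(g)\tilde\chi(\psi_n(y))^{-1}$ along with $\phi(o,e) = 1$ translates this into $(\mathcal L_\vp^n\phi)(o,g) \le \rho^n\phi(o,g)$, which is (1). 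The delicate point is the interchange of averaging and summation; AM--GM is essential here, because it converts the sharp equality $\sum_y a_y(g) = \rho^n$ into an inequality pointing in the direction needed for (1).
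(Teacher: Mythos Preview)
Your proof is correct and follows essentially the same route as the paper. The paper uses a right-invariant Banach mean $M$ on $\ell^\infty(G)$, defining $\phi(z,\gamma)=\exp M\bigl(g\mapsto\log\frac{\Phi(z,g\gamma^{-1})}{\Phi(o,g)}\bigr)$, and then applies Jensen's inequality (for $\exp$) together with finite additivity of $M$ to obtain~(i), and right-invariance of $M$ for~(ii); your F{\o}lner-average construction with a diagonal subsequence, AM--GM in place of Jensen, and the approximate right-invariance $|F_m\gamma\triangle F_m|/|F_m|\to 0$ are exactly the concrete realisations of these same steps, with the only cosmetic difference being your choice of $g\gamma$ rather than $g\gamma^{-1}$ in the second slot.
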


\begin{proof}
The construction of the function $\phi$ uses the amenability of $G$.
Let $M$ be a right-invariant Banach mean on $\ell^\infty(G)$. By Jensen's inequality, if $\alpha$ is convex
and $f \in \ell^\infty(G)$
then
\[
M(\alpha(f)) \ge \alpha(M(f)).
\]
We apply this to the functions 
\[
f_{z,\gamma}(g) = \log \frac{\Phi(z,g\gamma^{-1})}{\Phi(o,g)},
\]
where $(z,\gamma) \in \mathcal Z \times G$.
It follows from Corollary \ref{cor:gamma_bounded} that $f_{z,\gamma} \in \ell^\infty(G)$, for all
$(z,\gamma) \in \mathcal Z \times G$. 
Thus, we have
\[
M\left(g \mapsto \frac{\Phi(z,g\gamma^{-1})}{\Phi(o,g)}\right)
= M\left(g \mapsto \exp \log \frac{\Phi(z,g\gamma^{-1})}{\Phi(o,g)}\right)
\ge \exp M(f_{z,\gamma}).
\]
Now define $\phi : \mathcal Z \times G \to \mathbb R^{>0}$ by
\[
\phi(z,\gamma) = \exp M(f_{z,\gamma}).
\]

We will first prove that the inequality in part (i) holds. We need a little care as $M$ is only finitely additive. Thus, we work with finite subsets of $G$ that exhaust $G$.
Let $\{g_k\}_{k=1}^\infty$ be an enumeration of $G$ and, for any $N \ge 1$, let 
$G_N = \{g_1,\ldots,g_N\}$. Then Lemma \ref{lem:eigenvalue_equation} gives us
\begin{align*}
\rho^n &=M\left(g \mapsto \frac{(\mathcal L_\vp^n \Phi)(o,g)}{\Phi(o,g)}\right)
= 
M
\left(g \mapsto \frac{1}{\Phi(o,g)}\sum_{\sigma^n z = o} e^{\vp^n(z)} \Phi(z,g\psi_n(z)^{-1})\right)
\\
&\ge
M
\left(g \mapsto \frac{1}{\Phi(o,g)}\sum_{\substack{\sigma^n z = o \\ \psi_n(z) \in G_N}} 
e^{\vp^n(z)} \Phi(z,g\psi_n(z)^{-1})\right)
\\
&=
\sum_{\substack{\sigma^n z = o \\ \psi_n(z) \in G_N}} e^{\vp^n(z)}
M\left(g \mapsto \frac{\Phi(z,g\psi_n(z)^{-1})}{\Phi(o,g)}\right)
\\
&\ge
\sum_{\substack{\sigma^n z = o \\ \psi_n(z) \in G_N}} e^{\vp^n(z)}
\exp M\left(g \mapsto \log \frac{\Phi(z,g\psi_n(z)^{-1})}{\Phi(o,g)}\right)
\\
&=
\sum_{\substack{\sigma^n z = o \\ \psi_n(z) \in G_N}} e^{\vp^n(z)}
\phi(z,g\psi_n(z)^{-1}).
\end{align*}
Taking the supremum over $N$ gives the inequality in (i).

Next we show that part (ii) holds.
First note that we have
\begin{align*}
\log \phi(o,\gamma \delta) &=
M\left(g \mapsto \log \frac{\Phi(o,g\delta^{-1}\gamma^{-1})}{\Phi(o,g)}\right)
\\
&= M\left(g \mapsto \log \frac{\Phi(o,g\delta^{-1}\gamma^{-1}}{\Phi(o,g\delta^{-1})}\right)
+
M\left(g \mapsto \log \frac{\Phi(o,g\delta^{-1})}{\Phi(o,g)}\right)
\\
&=
\phi(o,\gamma) + \phi(o,\delta)
\end{align*}
and
\begin{align*}
\log \phi(o,\gamma^{-1}) 
&= 
M\left(g \mapsto \log \frac{\Phi(o,g\gamma)}{\Phi(o,g)}\right)
=
M\left(g \mapsto \log \frac{\Phi(o,g)}{\Phi(o,g\gamma^{-1})}\right)
\\
&= 
M\left(g \mapsto -\log \frac{\Phi(o,g\gamma^{-1})}{\Phi(o,g)}\right)
= -\log \phi(o,\gamma).
\end{align*}
Hence $\gamma \mapsto \phi(o,\gamma)$ is a homomorphism.
Next, we observe that
\begin{align*}
\log \phi(z,\gamma) &= 
M\left(g \mapsto \log \frac{\Phi(z,g\gamma)}{\Phi(o,g)}\right)
\\
&= M\left(g \mapsto \log \frac{\Phi(o,g\gamma)}{\Phi(o,g)}\right)
+
M\left(g \mapsto \log \frac{\Phi(z,g\gamma)}{\Phi(o,g\gamma)}\right)
\\
&= M\left(g \mapsto \log \frac{\Phi(o,g\gamma)}{\Phi(o,g)}\right)
+
M\left(g \mapsto \log \frac{\Phi(z,g)}{\Phi(o,g)}\right)
\\
&= \log \phi(o,\gamma) + \log \phi(z,e).
\end{align*}
Hence
\[
\gamma \mapsto \frac{\phi(z,\gamma)}{\phi(z,e)} = \phi(o,\gamma)
\]
is a homomorphism.

To finish, we note that, by Corollary \ref{cor:gamma_bounded}, $\phi(z,\gamma)>0$ and 
one easily sees that this positive lower bound can be taken uniformly in $z \in \mathcal Z \cap [a]$.
\end{proof}

\begin{lemma}\label{lem:inequality_for_phi_bar}
There is a function $\bar \phi : \mathcal Z \times \oG \to \mathbb R^{>0}$
satisfying 
\[
(\mathfrak L_{\vp}^n \bar \phi)(o,g)\le \rho^n \bar \phi(o,g),
\]
for all $n \ge 1$, and,
for all $a \in S$ and all $m \in \oG$, we have
\[
\inf_{z \in \mathcal Z \cap [a]} \bar \phi(z,m) >0.
\]
\end{lemma}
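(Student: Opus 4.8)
The plan is to obtain $\bar\phi$ by pushing $\phi$ (from Lemma \ref{lem:inequality_for_phi}) forward along the projection $\pi : G \to \oG$. The key structural point is that for each $z \in \mathcal Z$ the map $h_z : \gamma \mapsto \phi(z,\gamma)/\phi(z,e)$, which Lemma \ref{lem:inequality_for_phi}(ii) tells us is a homomorphism from $G$ to $\mathbb R^{>0}$, takes values in an abelian torsion-free group and hence kills $[G,G]$ and every torsion element; consequently it is trivial on $\ker\pi$ and factors through $\oG$. In particular $\phi(z,\cdot) = \phi(z,e)\,h_z(\cdot)$ is constant on each coset of $\ker\pi$.

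Using this, I would fix, for each $m \in \oG$, some $\gamma \in \pi^{-1}(m)$ and set $\bar\phi(z,m) := \phi(z,\gamma)$; by the previous paragraph this does not depend on the choice of representative $\gamma$, so $\bar\phi : \mathcal Z \times \oG \to \mathbb R^{>0}$ is well-defined. The uniform positivity is then immediate: for fixed $a \in S$ and $m \in \oG$, picking $\gamma \in \pi^{-1}(m)$ gives $\inf_{z \in \mathcal Z \cap [a]} \bar\phi(z,m) = \inf_{z \in \mathcal Z \cap [a]} \phi(z,\gamma) > 0$ by the last assertion of Lemma \ref{lem:inequality_for_phi}.

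The inequality for $\mathfrak L_\vp$ follows by relating it directly to $\mathcal L_\vp$. Fix $g \in \oG$ and choose $\gamma \in G$ with $\pi(\gamma) = g$. Since $\bar\psi_n(z) = \pi(\psi_n(z))$, for every $z$ with $\sigma^n z = o$ we have $g - \bar\psi_n(z) = \pi(\gamma\psi_n(z)^{-1})$, and hence $\bar\phi(z,\, g - \bar\psi_n(z)) = \phi(z,\gamma\psi_n(z)^{-1})$ by the definition of $\bar\phi$. Plugging this into the iterate formula for $\mathfrak L_\vp$ gives
\[
(\mathfrak L_\vp^n \bar\phi)(o,g) = \sum_{\sigma^n z = o} e^{\vp^n(z)} \,\bar\phi(z,\, g - \bar\psi_n(z)) = \sum_{\sigma^n z = o} e^{\vp^n(z)} \,\phi(z,\gamma\psi_n(z)^{-1}) = (\mathcal L_\vp^n \phi)(o,\gamma),
\]
and by Lemma \ref{lem:inequality_for_phi}(i) this is at most $\rho^n \phi(o,\gamma) = \rho^n \bar\phi(o,g)$, as required.

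The only delicate point is the well-definedness of $\bar\phi$, i.e. that a positive-real-valued homomorphism on $G$ descends to $\oG$; but this is precisely the property for which $\oG$ was introduced, so I do not expect it to be a real obstacle, and everything else is routine bookkeeping with the two transfer operators.
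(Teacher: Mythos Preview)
Your proposal is correct and follows essentially the same approach as the paper: define $\bar\phi$ by pushing $\phi$ forward along $\pi$ (using that the homomorphism $\gamma \mapsto \phi(z,\gamma)/\phi(z,e)$ factors through $\oG$), then identify $(\mathfrak L_\vp^n\bar\phi)(o,g)$ with $(\mathcal L_\vp^n\phi)(o,\gamma)$ and invoke Lemma~\ref{lem:inequality_for_phi}. In fact you give slightly more detail than the paper, which only writes out the transfer-operator inequality at $g=0$ (the case actually used later), whereas you verify it for general $g$.
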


\begin{proof}
By part (ii) of Lemma \ref{lem:inequality_for_phi}, for each $z \in \mathcal Z$,
there is a homomorphism
$\Upsilon_z : \oG \to \mathbb R^{>0}$ such that
\[
(\Upsilon_z \circ \pi)(\gamma) = \frac{\phi(z,\gamma)}{\phi(z,e)}.
\]
We deduce that if $\pi(\gamma)=\pi(\delta)$ then $\phi(z,\gamma)=\phi(z,\delta)$,
and we define $\bar\phi :\mathcal Z \times \oG \to \mathbb R^{>0}$ by
\[
\bar \phi(z,m) = \phi(z,\gamma)
\]
for any $\gamma \in G$ satisfying $\pi(\gamma)=m$.

We then have
\begin{align*}
(\mathfrak L_\vp^n\bar\phi)(o,0) &=
\sum_{\sigma^n z=o} e^{\vp^n(z)} \bar\phi(z,\bar{\psi}_n(z)^{-1})
\\
&=
\sum_{\sigma^n z=o} e^{\vp^n(z)} \phi(z,\psi_n(z)^{-1})
\\
&= (\mathcal L_\vp^n\phi)(o,e) \le \rho^n\phi(o,e) = \rho^n \bar\phi(o,0).
\end{align*}

The final statement follows from Lemma \ref{lem:inequality_for_phi} and the definition
of $\bar \phi$.
\end{proof}

We now complete the proof of Theorem \ref{th:main}.
Let 
\[
c = \inf_{z \in \mathcal Z \cap [a]} \bar\phi(z,0)>0.
\]
Using Lemma \ref{lem:GP_from_transfer}, we have that
\begin{align*}
\PG(\vp,T_{\bar \psi}) 
&=
\limsup_{n \to \infty}\frac{1}{n} \log (\mathfrak L_\vp^n (c\mathbbm 1_{[a] \times \{0\}}))(o,0)
\\
&\le 
\limsup_{n \to \infty}\frac{1}{n} \log (\mathfrak L_\vp^n \bar \phi)(o,0)
\le \rho,
\end{align*}
as required.

\begin{remark}
We could reformulate Theorem \ref{th:main} and the above proof in terms of the skew product
$T_{\psi^{\mathrm{ab}}} : \Sigma \times \Gab \to \Sigma \times \Gab$ without significantly changing the argument.
We observe that $\langle \xi,\cdot \rangle$ represents a choice of homomorphism from $\oG$ to $\R$ and 
there is a natural isomorphism between $\mathrm{Hom}(\oG,\R)$ and $\mathrm{Hom}(\Gab,\R)$.
Then $\langle \xi, \bar \psi \rangle$ is replaced by $\chi \circ \psi^{\mathrm{ab}}$, for an appropriate 
choice of $\chi \in \mathrm{Hom}(\Gab,\R)$.
\end{remark}

\section{The BIP property and skew product extensions}\label{sec:bip}

We now introduce the stronger condition on $\Sigma$ required for the equidistibution result we described
in the introduction.
Let $\sigma : \Sigma \to \Sigma$ be a countable state Markov shift.
We say that $\sigma : \Sigma \to \Sigma$ satisfies the \emph{big images and pre-images} (BIP) property if
there exist $b_1,\ldots,b_N \in S$ such that, for all $a \in S$, there exist
$i,j \in \{1,\ldots,N\}$ with $A(b_i,a)=1$ and $A(a,b_j)=1$.
Note that topologically transitive subshifts of finite type trivially satisfy this condition and, indeed,
BIP shifts share many thermodynamic properties with subshifts of finite type, in particular the existence of Gibbs measures.
(The terminology BIP appears in, for example, the work of Sarig. 
If $\sigma : \Sigma \to \Sigma$ is topologically mixing then BIP
is equivalent to the finite primitivity condition
used by Mauldin and Urba\'nski \cite{MU2001}, \cite{MU2003}. Thus, we can cite results from both sets of authors.)

Let $\vp : \Sigma \to \R$ be a locally H\"older continuous function.
We say that a $\sigma$-invariant probability measure $\mu$ on $\Sigma$ is a 
\emph{Gibbs measure} for $\vp$ if there are constants $A,B$ and $P$
(depending only on $\vp$) such that
\[
A \le \frac{\mu([w])}{e^{\vp^n(x)-nP}} \le B
\]
for all $w \in \W_n$, for all $n \ge 1$ and all $x \in [w]$. 
If $\vp$ has a Gibbs measure then $P=\PG(\vp,\sigma)$.

From now on, we assume that $\sigma : \Sigma \to \Sigma$ is topologically mixing and satisfies BIP.
Let $\vp : \Sigma \to \R$ be locally H\"older continuous and summable, so that 
$\PG(\vp,\sigma)$ is finite. Then $\vp$ has a unique Gibbs measure, which we denote by $\mu_\vp$, and $P=\PG(\vp)$ (Theorem 1
of \cite{Sarig-PAMS}) .
We also have a characterization of Gurevi\v{c} pressure in terms of periodic points:
\begin{equation}\label{eq:GP_periodic}
\PG(\vp,\sigma) = \lim_{n \to \infty} \frac{1}{n} \log \sum_{\sigma^nx=x} e^{\vp^n(x)}
\end{equation}
(Corollary 1 of \cite{Sarig-PAMS}). (Note that the equality (\ref{eq:GP_periodic}) may fail 
in the absence of the BIP property.)

In the presence of BIP, we also have stronger results for our transfer operators, acting
on an appropriate Banach space.
Let $\mathcal F_\theta^b$ denote the space of bounded $\theta$-locally H\"older continuous functions
$v: \Sigma \to \C$ with the norm $\|\cdot\|_\theta = \|\cdot\|_\infty +|\cdot|_\theta$,
where
\[
|v|_\theta=
\sup_{n \ge 1} \sup_{w \in \W_n} \sup_{x,y \in [w]} \frac{|v(x)-v(y)|}{\theta^n}.
\]
If $\vp : \Sigma \to \R$ is \emph{summable} then
$L_\vp : \mathcal F_\theta^b \to \mathcal F_\theta^b$ is a well-defined bounded
linear operator (Lemma 2.4.1 of \cite{MU2003}).

We have the following analyticity result.

\begin{lemma}[Theorem 2.6.13 and Proposition 2.6.13 of \cite{MU2003}]\label{lem:diff_of_GP}
Let $\sigma : \Sigma \to \Sigma$ be topologically mixing and satisfy BIP and let $\vp,\theta : \Sigma \to \R$ be weakly H\"older continuous. Suppose there exists $\epsilon>0$ such
that $\vp+t\theta$ is summable for $|t|<\epsilon$. Then $t \mapsto \PG(\vp+t\theta,\sigma)$ is real analytic on $(-\epsilon,\epsilon)$ and 
\[
\frac{d\PG(\vp+t\theta,\sigma)}{dt}
\Bigg|_{t=0}
=\int \theta\, d\mu_\vp.
\]
\end{lemma}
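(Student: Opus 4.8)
The plan is to prove the analyticity and derivative formula for $t \mapsto \PG(\vp + t\theta,\sigma)$ by reducing the Gurevi\v{c} pressure to the leading eigenvalue of the transfer operator $L_{\vp+t\theta}$ acting on the Banach space $\mathcal F_\theta^b$, and then applying analytic perturbation theory. First I would recall that, under the standing assumption that $\sigma$ is topologically mixing and satisfies BIP and that $\vp + t\theta$ is summable, the operator $L_{\vp+t\theta} : \mathcal F_\theta^b \to \mathcal F_\theta^b$ is a well-defined bounded operator (by Lemma 2.4.1 of \cite{MU2003}, as quoted above). The key structural input is the Ruelle--Perron--Frobenius theorem in the BIP setting (this is exactly the content of the cited results, Theorem 2.4.6 / Theorem 2.6.13 of \cite{MU2003}): $L_{\vp+t\theta}$ has a simple, isolated, positive leading eigenvalue $\lambda(t) = e^{\PG(\vp+t\theta,\sigma)}$, with a spectral gap — the rest of the spectrum contained in a disc of strictly smaller radius — and a positive eigenfunction $h_t \in \mathcal F_\theta^b$ and eigenmeasure $\nu_t$, normalized so that the spectral projection is $v \mapsto (\int v\, d\nu_t)\, h_t$.

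The analyticity then follows from the standard argument that $t \mapsto L_{\vp+t\theta}$ is a real-analytic family of bounded operators on $\mathcal F_\theta^b$ (on the interval $(-\epsilon,\epsilon)$ where summability holds, using the weak/local H\"older continuity of $\vp$ and $\theta$ to control the norm: one expands $e^{(\vp+t\theta)(y)} = e^{\vp(y)}\sum_{k\ge 0} t^k \theta(y)^k/k!$ and checks convergence in operator norm on a neighbourhood of any $t_0$), and that a simple isolated eigenvalue of an analytic family of operators depends analytically on the parameter (Kato's perturbation theory). Hence $\lambda(t)$, and therefore $\PG(\vp+t\theta,\sigma) = \log \lambda(t)$, is real analytic on $(-\epsilon,\epsilon)$.

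For the derivative formula I would differentiate the eigenvalue equation $L_{\vp+t\theta} h_t = \lambda(t) h_t$ at $t = 0$: writing $\dot{L}$ for the derivative of the operator family at $0$, which acts by $(\dot L v)(x) = \sum_{\sigma y = x} \theta(y) e^{\vp(y)} v(y)$, one gets $\dot L h_0 + L_\vp \dot h_0 = \dot\lambda(0) h_0 + \lambda(0)\dot h_0$. Integrating both sides against the eigenmeasure $\nu_0$ of the dual operator $L_\vp^*$ (so $L_\vp^* \nu_0 = \lambda(0)\nu_0$) kills the $\dot h_0$ terms, leaving $\int \dot L h_0\, d\nu_0 = \dot\lambda(0) \int h_0\, d\nu_0$. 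Using the normalization $\int h_0\, d\nu_0 = 1$ and the identity $\int \dot L v\, d\nu_0 = \int \theta \cdot v\, d\nu_0 \cdot \lambda(0)$ (which is the defining relation of $\nu_0$ applied with the extra weight $\theta$), one obtains $\dot\lambda(0) = \lambda(0)\int \theta h_0\, d\nu_0$. Since the Gibbs measure is $d\mu_\vp = h_0\, d\nu_0$ and $\PG(\vp,\sigma) = \log\lambda(0)$, dividing by $\lambda(0)$ gives
\[
\frac{d\PG(\vp+t\theta,\sigma)}{dt}\bigg|_{t=0} = \int \theta\, d\mu_\vp,
\]
as claimed. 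The main obstacle is verifying that the family $t \mapsto L_{\vp+t\theta}$ is genuinely analytic in operator norm on $\mathcal F_\theta^b$ rather than merely continuous — this requires the uniform summability hypothesis together with the H\"older bounds on $\theta$ to control both the $\|\cdot\|_\infty$ and the $|\cdot|_\theta$ parts of the norm of the Taylor tail — but this is precisely the computation carried out in \cite{MU2003}, so in practice one simply invokes Theorem 2.6.13 and Proposition 2.6.13 there.
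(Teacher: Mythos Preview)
Your proposal is correct. The paper does not give its own proof of this lemma at all; it simply cites Theorem 2.6.13 and Proposition 2.6.13 of \cite{MU2003}, and your sketch (Ruelle--Perron--Frobenius spectral gap plus Kato perturbation theory, then differentiating the eigenvalue equation and pairing with the eigenmeasure) is exactly the standard argument underlying those cited results --- which you yourself acknowledge at the end.
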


We need an analogue of equation (\ref{eq:GP_periodic}) for $T_\psi$.
If 
$T_\psi : \Sigma \times G \to \Sigma \times G$ is mixing then
we have the following lemma.

\begin{lemma}\label{lem:GP_BIP_sp}
Let $\sigma : \Sigma \to \Sigma$ be topologically mixing and satisfy BIP and, furthermore,
let $T_\psi : \Sigma \times G \to \Sigma \times G$ be topologically mixing.
Then we have
\[
\PG(\vp,T_\psi) = \lim_{n \to \infty} \frac{1}{n} \log \sum_{\substack{\sigma^nx=x \\\psi_n(x)=e}} e^{\vp^n(x)}.
\]
\end{lemma}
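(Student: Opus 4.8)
The plan is to show that the exponential growth rate of the periodic sum
$\sum_{\sigma^n x = x,\ \psi_n(x)=e} e^{\vp^n(x)}$ coincides with the quantity
$\PG(\vp,T_\psi) = \limsup_n \frac1n \log (\mathcal L_\vp^n \mathbbm 1_{[a]\times\{e\}})(o,e)$ from Lemma \ref{lem:GP_from_transfer}. One inequality is essentially trivial: a periodic point $x$ with $\sigma^n x = x$, $\psi_n(x)=e$ and $x_1 = a$ contributes to $(\mathcal L_\vp^n\mathbbm 1_{[a]\times\{e\}})(x,e)$ (taking the pre-image $y = x$), and summing over the finitely many $a \in \{b_1,\dots,b_N\}$ from the BIP property, together with Lemma \ref{lem:bounded_variation} to relate the value at $x$ to the value at the fixed basepoint $o$, gives
$\sum_{\sigma^n x = x,\ \psi_n(x)=e} e^{\vp^n(x)} \le C \sum_{i=1}^N (\mathcal L_\vp^n\mathbbm 1_{[b_i]\times\{e\}})(o,e)$, whence $\limsup_n \frac1n\log(\text{periodic sum}) \le \PG(\vp,T_\psi)$.

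For the reverse inequality one wants to convert orbit segments counted by the transfer operator into genuine periodic orbits of $T_\psi$. The standard device is to use topological mixing of $T_\psi$ together with BIP on the base: fix the BIP states $b_1,\dots,b_N$, and for each pair $(b_i,b_j)$ pick, using mixing of $T_\psi$, a connecting word that returns the $G$-coordinate to $e$ — more precisely, one first closes up in the base using the classical finite-type argument (BIP gives a uniformly bounded ``anchor'' so that any allowed word of length $n$ starting and ending near the $b_i$'s can be extended by a bounded word to a periodic orbit), and then one corrects the group coordinate: if an orbit segment of length $n$ has $\psi_n$-value $g$, one appends a bounded word realizing $g^{-1}$ in the $G$-coordinate, which exists for all sufficiently long lengths by topological mixing of $T_\psi$. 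Grouping the pre-images counted by $(\mathcal L_\vp^{n}\mathbbm 1_{[a]\times\{e\}})(o,e)$ according to their first symbol and applying this closing procedure, each such pre-image injects (up to bounded multiplicity) into the set of periodic points of period $n + O(1)$ with $\psi$-value $e$, and the weight $e^{\vp^{n}}$ changes by a bounded factor (Lemma \ref{lem:bounded_variation}). This yields
$(\mathcal L_\vp^{n}\mathbbm 1_{[a]\times\{e\}})(o,e) \le C' \sum_{m \le n + M} \sum_{\sigma^m x = x,\ \psi_m(x)=e} e^{\vp^m(x)}$ for a fixed $M$, and taking $\frac1n\log$ and letting $n\to\infty$ gives $\PG(\vp,T_\psi) \le \limsup_m \frac1m\log(\text{periodic sum})$. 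Finally, because $T_\psi$ is topologically mixing, the $\limsup$ in Lemma \ref{lem:GP_from_transfer} is a genuine limit, so the periodic sum also has a genuine limit equal to $\PG(\vp,T_\psi)$.

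The main obstacle is the bookkeeping in the closing argument: one must ensure the correction words (both the base-closing word from BIP and the group-correcting word from mixing of $T_\psi$) can be chosen with length bounded independently of $n$ and of the orbit segment, and that the resulting map from pre-images to periodic points is finite-to-one with a uniform bound, so that no spurious exponential factor is introduced. This is exactly the point where one imitates Sarig's proof of (\ref{eq:GP_periodic}) for the base (Corollary 1 of \cite{Sarig-PAMS}), now carried out in the skew product $\Sigma \times G$, which is itself a countable state Markov shift to which those arguments apply once one knows it is topologically mixing — and mixing of $T_\psi$ is precisely the hypothesis we have assumed. The analogue of BIP for $\Sigma \times G$ fails (the state set $S \times G$ is not ``big images'' in the required sense), so one cannot simply quote \cite{Sarig-PAMS} for the extension; instead one uses BIP of $\Sigma$ to handle the base coordinate and mixing of $T_\psi$ to handle the group coordinate separately, which is why both hypotheses are needed.
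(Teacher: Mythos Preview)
You have the two inequalities reversed, and this leaves a genuine gap. Recall that $\PG(\vp,T_\psi)$ is defined using periodic points restricted to a \emph{single} cylinder $[a]\times\{e\}$, whereas the sum in the lemma runs over \emph{all} $x$ with $\sigma^n x=x$ and $\psi_n(x)=e$. Hence the inequality
\[
\PG(\vp,T_\psi)\;\le\;\liminf_{n\to\infty}\frac1n\log\sum_{\substack{\sigma^n x=x\\ \psi_n(x)=e}}e^{\vp^n(x)}
\]
is the immediate one (drop the cylinder constraint), and the substantive direction is the \emph{upper} bound on the full periodic sum. Your first paragraph attempts this upper bound but the argument fails: a periodic point $x$ with $\psi_n(x)=e$ may have $x_1$ equal to any state in $S$, not just one of the BIP states $b_1,\dots,b_N$, so the inequality $\sum_{\sigma^n x=x,\ \psi_n(x)=e}e^{\vp^n(x)}\le C\sum_{i=1}^N(\mathcal L_\vp^n\mathbbm 1_{[b_i]\times\{e\}})(o,e)$ is simply false in general for a countable alphabet.

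Your closing argument in the second paragraph is sound, but it runs in the trivial direction: you embed pre-images of a fixed $o\in[a]$ into periodic points of the full system, which only re-proves $\PG(\vp,T_\psi)\le(\text{periodic growth rate})$. What the paper does (adapting Sarig's proof of (\ref{eq:GP_periodic})) is to run the closing argument the \emph{other} way. One bounds the full periodic sum by the partition function $\sum_{w\in\mathcal W_n,\ \psi_n(w)=e}e^{\vp^n_+(w)}$, and then embeds each such word $w$ into a periodic point \emph{in the fixed cylinder $[a]$}: by BIP choose $i,j$ with $b_iwb_j$ allowed, and use mixing of $T_\psi$ to pre-arrange connectors $w_{a,b_i},w_{b_j,a}$ of fixed length $n_0$ with $\psi_{n_0}=e$, so that $aw_{a,b_i}b_iwb_jw_{b_j,a}$ closes up to a periodic point in $[a]$ of period $n+M$. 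The resulting $\psi_{n+M}$-value is not $e$ but lies in a \emph{fixed finite} set (the values $\psi(a)\psi(b_i)\psi(b_j)$), and the paper then invokes mixing of $T_\psi$ once more to say that the growth rate of $\sum_{\sigma^m x=x,\ x\in[a],\ \psi_m(x)=g}e^{\vp^m(x)}$ is independent of $g$ and equals $\PG(\vp,T_\psi)$. Your idea of appending a further group-correcting word would also work here, but only once the closing is directed \emph{into} the fixed cylinder rather than out of it.
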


\begin{proof}
It is immediate from the definition of Gurevi\v{c} pressure that
\[
\PG(\vp,T_\psi) \le \lim_{n \to \infty} \frac{1}{n} \log \sum_{\substack{\sigma^nx=x \\\psi_n(x)=e}} e^{\vp^n(x)}.
\]
For the other direction, we adapt the proof of Theorem 1 in \cite{Sarig-PAMS}.
Given $w \in \mathcal W_n$, let
$
\vp^n_+(w) = \sup\{\vp^n(y) \hbox{ : } y \in [w]\}$.
Now fix $a \in S$. By the mixing of $T_\psi$,
there exists $n_0 \ge 1$ and, for $i=1,\ldots,N$, $w_{a,b_i}, w_{b_i,a} \in \mathcal W_{n_0}$
such that
$aw_{a,b_i}b_i \in \mathcal W_{n_0 +2}$, $b_iw_{b_i,a}a \in \mathcal W_{n_0+2}$,
$\psi_{n_0}(w_{a,b_i})=e$ and $\psi_{n_0}(w_{b_i,a})=e$.
By the BIP property, given $w \in \mathcal W_n$, there exists $i,j \in \{i,\ldots,N\}$ such that
$b_iwb_j 
\in \mathcal W_{n+2}$.
Hence
\[
aw_{a,b_i} b_i w b_j w_{b_j,a} a \in \mathcal W_{n+M+1},
\]
where $M = 2n_0+3$, and concatenating $aw_{a,b_i} b_i w b_j w_{b_j,a}$ gives a periodic point 
of period $n+M$.
Furthermore, if $\psi_n(w)=e$ then
\[
|\psi_{n+M}(aw_{a,b_i} b_i w b_j w_{b_j,a})|
\]
is bounded independently of $n$.
We conclude that there exists $n_1 \ge 1$ and $C >0$ so that
\begin{equation}\label{eq:GP_BIP_sp1}
\sum_{|g|\le n_1} \sum_{\substack{\sigma^{n+M}x=x \\ \psi_{n+M}(x)=g}} e^{\vp^{n+M}(x)}
\ge 
C
\sum_{\substack{w \in \mathcal W_n \\ \psi_n(w)=e}} e^{\vp^n_+(w)}
\ge 
C
(\mathcal L_\vp^n \mathbbm 1_{[a] \times \{e\}})(o,g),
\end{equation}
for any choice of $o \in [a]$.

Now, the mixing of $T_\psi$ implies that 
\[
\lim_{n \to \infty} \frac{1}{n} \log \sum_{\substack{\sigma^{n}x=x \\ \psi_{n}(x)=g}} e^{\vp^{n}(x)}
\]
is independent of $g \in G$. Combining this with (\ref{eq:GP_BIP_sp1}), we see that 
\[
\lim_{n \to \infty} \frac{1}{n} \log \sum_{\substack{\sigma^nx=x \\\psi_n(x)=e}} e^{\vp^n(x)}
\ge \PG(\vp,T_\psi),
\]
as required.
\end{proof}

We now wish to impose more conditions on $\psi$ and $\vp$.
 As above, let $\sigma : \Sigma \to \Sigma$ be mixing and satisfy BIP and let $\vp : \Sigma \to \R$ be a locally
 H\"older function which is summable.
Let $G$ be a finitely generated group and let $\psi : \Sigma \to G$ satisfy
$\psi(x)=\psi(x_1)$. 
To simplify notation, we identify $\bar \psi : \Sigma \to \oG$ with a function 
 $f : \Sigma \to \Z^d$. Let
 \[
 \delta(\vp,f) := \sup\{r \ge 0 \hbox{ : } L_{\vp+r|f|} \mbox{ is summable}\},
 \]
 where $|\cdot|$ denotes the $2$-norm on $\R^d$. We impose two assumptions on $\vp$
 and $\psi$.
 
  \medskip
\noindent
\emph{Assumption (I)}:
$T_\psi :\Sigma \times G \to \Sigma \times G$ is topologically mixing.

 \medskip
\noindent
\emph{Assumption (II)}:
 $\delta(\vp,f)>0$.
 
 \medskip
 Assumption (I) immediately implies that $T_f : \Sigma \times \Z^d \to \Sigma \times \Z^d$ is also topologically mixing. Furthermore, Assumption (I) implies that $f$ is \emph{aperiodic}:
 if $e^{2\pi i \langle t,f(x)\rangle} = z u(\sigma x)/u(x)$ for all $x \in \Sigma$, where $t \in \R^d$,
 $z \in \mathrm{U}(1) = \{z \in \C \hbox{ : }|z|=1\}$ and $u \in C(\Sigma,\mathrm{U}(1))$, then 
 $t=0\in \R^d/\Z^d$ and $z=1$.
 If Assumption (II) holds then we easily see that $\vp +\langle w,f \rangle$ is summable,
 $L_{\vp + \langle w,f \rangle} : \mathcal F_\theta^b \to \mathcal F_\theta^b$ is a well-defined bounded linear operator and 
 \[
 \mathfrak p(w):=\PG(\vp+\langle w,f \rangle,\sigma)
 \]
 is finite for $|w|<\delta(\vp,f)$.
 
 We also need to consider the transfer operator with complex potentials. Write $\delta=\delta(\vp,f)$ 
 and let 
 \[
 B(\delta)
 = \{w \in \R^d \hbox{ : } |w| < \delta\}
 \]
 and 
 \[
 B_{\C}(\delta) = \{s \in \C^d \hbox{ : } |\mathrm{Re}(s)|<\delta\}.
 \]
 One can easily check that $s \mapsto L_{\vp+\langle s,f\rangle}$ is analytic on
 $B_\C(\delta)$ (cf. Corollary 2.6.10 in \cite{MU2003}). 
 Furthermore, for $s$ in a neighbourhood of $B(\delta)$ in $B_\C(\delta)$,
 $L_{\vp +\langle s,f\rangle}$ has a simple isolated maximal eigenvalue $\lambda(s)$,
 such that 
 \[
\lambda(w) = e^{\mathfrak p(w)},
 \]
 for $w \in B(\delta)$ (Proposition 2.8 in \cite{KK}). As a consequence, 
 using spectral perturbation theory, we have that
 $s \mapsto e^{\mathfrak p(s)}$ is analytic for $s$ is a neighbourhood of $B(\delta)$ in $B_\C(\delta)$.

We may also calculate the derivatives of $\mathfrak p$. 
From Lemma \ref{lem:diff_of_GP}, we then have
\[
\nabla \mathfrak p(w)= \int f \, d\mu^{w},
\]
where 
\[
\mu^w := \mu_{\vp + \langle w.f \rangle}.
\]
Furthermore, the function $\mathfrak p$ is strictly convex. (Strict convexity fails only if unless there is $w \ne 0$ such that
$\langle w,f \rangle$ is cohomologous to a constant, which violates the transitivity of $T_f$.)

 \medskip

We make a further assumption on $\vp$ and $\psi$.

\medskip
 \noindent
 \emph{Assumption (III)}.
The function $w \mapsto \mathfrak p(w)$ has a unique minimum at $\xi \in \mathrm{int}(B(\delta))$.

\medskip

Under Assumption (III), we have
\[
\nabla \mathfrak p(\xi)= \int f \, d\mu^\xi =0.
\]

\begin{remark}
Assumptions (II) and (III) are similar in spirit to the ``entropy gap'' conditions that appear in the recent
papers 
\cite{BCKM} and \cite{KM}.
\end{remark}

The value $\xi$ is important and allows us to relate the Gurevi\v{c} pressure $\PG(\vp,T_f)$
to a Gurevi\v{c} pressure with respect to $\sigma$. First, we give a technical lemma.
We observe that since $f$ is valued in $\Z^d$, $L_{\vp+\langle \xi+2\pi it,f\rangle}$ is well-defined 
for $t \in \R^d/\Z^d$.

\begin{lemma}\label{lem:smaller_spectral_radius}
For $t \in \R^d/\Z^d \setminus \{0\}$,
$L_{\vp + \langle \xi +2\pi it,f \rangle}$
has spectral radius strictly less than $e^{\mathfrak p(\xi)}$.
\end{lemma}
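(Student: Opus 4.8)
The plan is to argue by contradiction, exploiting the aperiodicity of $f$ (a consequence of Assumption~(I)) together with the uniqueness of the maximal eigenvalue for the real transfer operator. Fix $t \in \R^d/\Z^d \setminus \{0\}$ and suppose, for contradiction, that $L_{\vp + \langle \xi + 2\pi it, f\rangle}$ has spectral radius equal to $e^{\mathfrak p(\xi)}$. Since $|\langle \xi + 2\pi it, f(x)\rangle| = |\langle \xi, f(x)\rangle|$ in real part and $L_{\vp + \langle \xi, f\rangle}$ is a positive operator on $\mathcal F_\theta^b$ with simple isolated maximal eigenvalue $\lambda(\xi) = e^{\mathfrak p(\xi)}$ (from the spectral theory quoted before Lemma~\ref{lem:smaller_spectral_radius}), the twisted operator $L_{\vp+\langle \xi+2\pi it,f\rangle}$ differs from $L_{\vp+\langle\xi,f\rangle}$ only by the unimodular multiplier $e^{2\pi i \langle t, f\rangle}$. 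Standard Wielandt-type arguments for positive operators (as in \cite{MU2003}, or the classical Ruelle--Perron--Frobenius machinery) then show that equality of spectral radii forces the existence of an eigenfunction: there exist $z \in \mathrm{U}(1)$ and $u \in C(\Sigma,\mathrm{U}(1))$ (indeed $u \in \mathcal F_\theta^b$) with
\[
L_{\vp + \langle \xi + 2\pi it, f\rangle}(h u) = z\, e^{\mathfrak p(\xi)}\, h u,
\]
where $h = h_\xi > 0$ is the maximal eigenfunction of $L_{\vp+\langle\xi,f\rangle}$.

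The key step is then to unwind this eigenfunction equation. Dividing through by the corresponding equation $L_{\vp+\langle\xi,f\rangle} h = e^{\mathfrak p(\xi)} h$ and using the explicit form of the transfer operator, one obtains, for each $x$ and each $\sigma y = x$,
\[
e^{2\pi i \langle t, f(y)\rangle}\, \frac{u(y)}{u(x)} = z,
\]
i.e. $e^{2\pi i \langle t, f(y)\rangle} = z\, u(x)/u(y) = z\, u(\sigma y)/u(y)$ for all $y \in \Sigma$. But this is precisely the hypothesis in the definition of aperiodicity of $f$ recalled just after Assumption~(I), and aperiodicity (guaranteed by Assumption~(I)) forces $t = 0 \in \R^d/\Z^d$ and $z = 1$. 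This contradicts the assumption $t \neq 0$, completing the proof.

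The main obstacle is the first step: extracting a genuine peripheral eigenfunction from the mere equality of spectral radii. In the subshift-of-finite-type case this is classical, but in the BIP countable-state setting one must ensure that the maximal eigenvalue of $L_{\vp+\langle\xi,f\rangle}$ is isolated and simple (available here since $\vp + \langle \xi, f\rangle$ is summable with $\xi \in \mathrm{int}(B(\delta))$, via the results of \cite{KK} and \cite{MU2003} quoted above) and that any eigenvalue of the twisted operator on the boundary circle of the spectrum is realised by an $\mathcal F_\theta^b$-eigenfunction. This follows from the quasi-compactness of $L_{\vp+\langle\xi,f\rangle}$ on $\mathcal F_\theta^b$ together with a perturbation/comparison argument: $|L_{\vp+\langle\xi+2\pi it,f\rangle} v| \le L_{\vp+\langle\xi,f\rangle}|v|$ pointwise, so the spectral radius of the twisted operator is at most $e^{\mathfrak p(\xi)}$, and when equality holds a standard argument (comparing iterates and using that $h$ is strictly positive) produces the unimodular cocycle $u$. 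Once this is in hand, the rest is the short cohomological computation above.
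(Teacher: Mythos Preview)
Your argument is essentially correct, but it is worth noting that the paper's own proof is a one-line citation: aperiodicity of $f$ follows from Assumption~(I), and then Theorem~2.14 of \cite{KK} gives the strict spectral-radius inequality directly. What you have written is, in effect, a sketch of the proof of that theorem in the BIP setting, so the two approaches are the same in spirit but at different levels of abstraction.

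Two comments on your write-up. First, the phrase ``dividing through by the corresponding equation'' is informal: the transfer operator is a sum over pre-images, so one cannot literally divide term by term. The correct mechanism is the equality case of the triangle inequality --- the identity $|L_{\vp+\langle\xi+2\pi it,f\rangle}(hu)| = L_{\vp+\langle\xi,f\rangle}|hu|$ forces all summands $e^{2\pi i\langle t,f(y)\rangle}u(y)/(zu(x))$ to share a common argument, and normalising gives the cocycle equation. Second, the step you flag as the main obstacle --- extracting a genuine eigenfunction from equality of spectral radii --- is not automatic in infinite dimensions, since spectral radius need not be attained by an eigenvalue. You are right that quasi-compactness of the \emph{twisted} operator on $\mathcal F_\theta^b$ is what is needed, and this is exactly the content supplied by \cite{KK} (via a Doeblin--Fortet inequality in the BIP setting). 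So your self-contained route still leans on the same machinery the paper cites; the gain is transparency about where aperiodicity enters, the cost is that you must reprove or re-cite the spectral-gap input anyway.
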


\begin{proof}
As noted above Assumption (I) implies that $f$ is aperiodic. It then follows from Theorem 2.14 of
\cite{KK} that $L_{\vp + \langle \xi +2\pi it,f \rangle}$
has spectral radius strictly less than $e^{\mathfrak p(\xi)}$.
\end{proof}

\begin{proposition}\label{prop:magic_xi}
We have
\[
\PG(\vp,T_f) = \PG(\vp+\langle \xi,f\rangle,\sigma).
\]
\end{proposition}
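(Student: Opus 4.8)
The plan is to compute the Gurevi\v{c} pressure $\PG(\vp,T_f)$ by expressing the relevant periodic-point sums via a Fourier transform over the dual torus $\R^d/\Z^d$, and then isolating the dominant contribution using Lemma \ref{lem:smaller_spectral_radius}. Concretely, for $x \in \Sigma$ with $\sigma^n x = x$, one has the indicator identity
\[
\mathbbm 1_{\{f^n(x) = 0\}} = \int_{\R^d/\Z^d} e^{2\pi i \langle t, f^n(x)\rangle}\, dt,
\]
so that
\[
\sum_{\substack{\sigma^n x = x \\ f^n(x)=0}} e^{\vp^n(x)} \mathbbm 1_{[a]}(x)
= \int_{\R^d/\Z^d} \sum_{\sigma^n x = x} e^{\vp^n(x) + 2\pi i \langle t, f^n(x)\rangle} \mathbbm 1_{[a]}(x)\, dt.
\]
Now multiply and divide by $e^{n\mathfrak p(\xi)}$ after inserting the twist $\langle \xi, f\rangle$ into the potential: since $f^n(x) = 0$ on the relevant set, $e^{\vp^n(x)}$ equals $e^{(\vp + \langle \xi, f\rangle)^n(x)}$ there, and we may freely insert $\langle \xi + 2\pi i t, f\rangle$ into the exponent inside the integral. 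This rewrites the sum as an integral over $t$ of traces (or trace-like cylinder sums) of the iterated operators $L_{\vp + \langle \xi + 2\pi i t, f\rangle}^n$.

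The key estimate is then a spectral-gap argument. For $t$ near $0$, $L_{\vp + \langle \xi + 2\pi i t, f\rangle}$ has a simple maximal eigenvalue analytically continuing $e^{\mathfrak p(\xi)}$; because $\nabla \mathfrak p(\xi) = \int f\, d\mu^\xi = 0$ (Assumption (III)) and $\mathfrak p$ is strictly convex, the eigenvalue modulus is $\le e^{\mathfrak p(\xi)}$ with a genuine quadratic decay in $|t|$. For $t$ bounded away from $0$ in $\R^d/\Z^d$, Lemma \ref{lem:smaller_spectral_radius} gives spectral radius strictly less than $e^{\mathfrak p(\xi)}$, and by continuity/compactness this is uniform on the complement of a neighbourhood of $0$. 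Combining these two regimes, the $t$-integral is dominated, up to subexponential factors, by $e^{n\mathfrak p(\xi)}$, which yields
\[
\lim_{n \to \infty} \frac{1}{n} \log \sum_{\substack{\sigma^n x = x \\ f^n(x)=0}} e^{\vp^n(x)} \mathbbm 1_{[a]}(x) = \mathfrak p(\xi) = \PG(\vp + \langle \xi, f\rangle, \sigma).
\]
On the other hand, by Lemma \ref{lem:GP_BIP_sp} (with $G = \Z^d$, noting Assumption (I)), the left-hand side equals $\PG(\vp, T_f)$, giving the claimed equality. Actually, one should be slightly careful here: Lemma \ref{lem:GP_BIP_sp} is stated with the full sum over periodic points rather than the restricted sum with $\mathbbm 1_{[a]}$, but the BIP closing argument used there (or Lemma \ref{lem:GP_from_transfer} applied to $T_f$, which is mixing) relates the two at the level of exponential growth rates, so this is not a real issue.

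The main obstacle I anticipate is making the spectral perturbation estimates honest and quantitative enough to control the integral over all of $\R^d/\Z^d$ simultaneously. One needs: (a) a uniform spectral gap for $L_{\vp + \langle \xi + 2\pi i t, f\rangle}$ away from $t = 0$, which follows from Lemma \ref{lem:smaller_spectral_radius} plus the continuity of the spectral radius in $t$ and compactness of the torus; (b) near $t = 0$, a decomposition $L_{\vp + \langle \xi + 2\pi i t, f\rangle}^n = \lambda(\xi + 2\pi i t)^n P_t + R_t^n$ with the remainder uniformly exponentially smaller, together with the bound $|\lambda(\xi + 2\pi i t)| \le e^{\mathfrak p(\xi)} e^{-c|t|^2}$ coming from the Hessian of $\mathfrak p$ at its minimum being positive definite (strict convexity). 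The lower bound — that the growth rate is at least $\mathfrak p(\xi)$ — is the easy half and can also be read off directly from the $t = 0$ term together with positivity, or simply from the already-established inequalities \eqref{eq:gur_pres_inequalities} evaluated at $w = \xi$, which give $\PG(\vp, T_f) \le \mathfrak p(\xi)$ automatically, so in fact only the reverse inequality $\PG(\vp, T_f) \ge \mathfrak p(\xi)$ needs the Fourier argument — and for that one really only needs the lower bound on the $t$-integral, which is comparatively soft.
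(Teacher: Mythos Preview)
Your approach coincides with the paper's: Fourier inversion over $\R^d/\Z^d$ converts the constrained periodic-point sum into a torus integral of iterates of $L_{\vp+\langle \xi+2\pi i t,f\rangle}$, Lemma~\ref{lem:smaller_spectral_radius} controls $t$ bounded away from $0$, eigenvalue perturbation near $t=0$ yields the asymptotic $e^{n\mathfrak p(\xi)}n^{-d/2}$, and the paper cites \cite{PS94} for this last step (after first passing from periodic points to pre-images of a fixed $o\in[a]$, which you gloss as ``trace-like cylinder sums'').

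One caveat: your final paragraph is muddled about which inequality is free. Inequality~\eqref{eq:gur_pres_inequalities} gives $\PG(\vp,T_f)\le\mathfrak p(\xi)$, so it is the \emph{lower} bound $\PG(\vp,T_f)\ge\mathfrak p(\xi)$ that needs the Fourier argument --- you write this correctly mid-paragraph but contradict it at the start. And that lower bound on an oscillatory integral is not ``comparatively soft'': it is precisely where the local-limit analysis enters, to show that the positive contribution from a neighbourhood of $t=0$ (of size $\asymp e^{n\mathfrak p(\xi)}n^{-d/2}$) is not cancelled by the rest of the torus. Positivity of the integrand at the single point $t=0$ does not by itself give a lower bound on the integral.
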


\begin{proof}
By the definition of Gurevi\v{c} pressure (and recalling our convention that $\vp : \Sigma \times \Z^d \to \R$
is defined by $\vp(x,\cdot)=\vp(x)$), we have
\[
\PG(\vp,T_f)
= \lim_{n \to \infty} \frac{1}{n} \log \sum_{\substack{\sigma^nx=x \\ f^n(x)=0}}
e^{\vp^n(x)} \mathbbm{1}_{[a]}(x).
\]
On the other hand, we may make the following calculation. Choose $o \in [a]$ and set
\[
\mathcal Z(n,a,o) = \{y \in [a] \hbox{ : } \sigma^ny=o\}.
\]
There is a natural bijection $q : \mathrm{Fix}_n(\sigma) \cap [a] \to \mathcal Z(n.a,o)$ given by
$q(x) = x_1\ldots x_no$.
We have
\begin{align*}
\sum_{\substack{\sigma^nx=x \\ f^n(x)=0x}} e^{\vp^n(x)} \mathbbm{1}_{[a]}(x)
&=\sum_{\substack{\sigma^nx=x \\ f^n(x)=0}} e^{\vp^n(x)+\langle \xi,f^n(x)\rangle} \mathbbm{1}_{[a]}(x)
\\
&= \int_{\R^d/\Z^d} \sum_{x \in \mathrm{Fix}_n(\sigma) \cap [a]}
e^{\vp^n(x)+\langle \xi+2\pi it,f^n(x)\rangle} \, dt
\\
&= \int_{\R^d/\Z^d} \sum_{y \in \mathcal Z(n,a,z)} e^{\vp^n(y)+\langle \xi+2\pi it,f^n(y)\rangle} \, dt
+ O(\theta^n e^{n\mathfrak p(\xi)})
\\
&=
\int_{\R^d/\Z^d} (L^n_{\vp+\langle \xi+2\pi i t,f\rangle} \mathbbm{1}_{[a]})(z)
+ O(\theta^n e^{n\mathfrak p(\xi)}),
\end{align*}
for some $0<\theta<1$. Using Lemma \ref{lem:smaller_spectral_radius} and the analysis in \cite{PS94}, we can show that
the right hand side behaves like $e^{n\mathfrak p(\xi)} n^{-d/2}$, so the result follows.
\end{proof}

\section{Equidistribution for skew product extensions}\label{sec:equi_for_skew}
In this section, we obtain a weighted equidistribution result for periodic points and pre-images with respect to
skew product extensions  of countable state Markov shifts with the BIP property satisfying Assumptions (I), (II) and (III).

For $x \in \Sigma$, let $\delta_x$ denote the Dirac measure at $x$ and write
\[
\tau_{x,n} = \frac{1}{n} \sum_{j=0}^{n-1} \delta_{\sigma^jx}.
\]
For a sequence of countable sets $\Lambda_n \subset \Sigma$, we write
\[
\Pi_\vp(\Lambda_n) = \sum_{x \in \Lambda_n} e^{\vp^n(x)}
\]
and introduce a sequence of probability measures on $\Sigma$,
\[
\mathfrak M_\vp(\Lambda_n) =  \frac{1}{\Pi_\vp(\Lambda_n)} \sum_{x \in \Lambda_n} e^{\vp^n(x)} \tau_{x,n}.
\]
We consider the following choices for $\Lambda_n$:
\begin{enumerate}
\item[(1)]
\[
\Lambda^\psi(n) =\{x \in \Sigma \hbox{ : } \sigma^n x=x, \ \psi_n(x) =0\};
\]
\item[(2)]
for $a \in S$,
\[
\Lambda_a^\psi(n) =\{x \in [a] \hbox{ : } \sigma^n x=x, \ \psi_n(x) =0\};
\]
\item[(3)]
for $o \in \Sigma$,
\[
\Lambda^\psi(n,o) = \{x\in \Sigma \hbox{ : } \sigma^nx=o, \ \psi_n(x)=e\};
\]
\item [(4)]
for $a \in S$ and $o \in [a]$,
\[
 \Lambda_a^\psi(n,o) = \{x\in [a] \hbox{ : } \sigma^nx=o, \ \psi_n(x)=e\}.
 \]
\end{enumerate}

\begin{theorem}\label{th:equi_for_G}
Let $\sigma : \Sigma \to \Sigma$ be a countable state Markov shift which satisfies BIP
and let $G$ be a finitely generated group. Let $\vp : \Sigma \to \R$ be locally H\"older continuous 
and let $\psi : \Sigma \to G$
satisfy $\psi(x)=\psi(x_1)$, such that $\vp$ and $\psi$ satisfy Assumptions (I), (II) and (III).
If $G$ is amenable then
$\mathfrak M_\vp(\Lambda^\psi(n))$,
$\mathfrak M_\vp(\Lambda_a^\psi(n))$,
$\mathfrak M_\vp(\Lambda^\psi(n,o))$ and
$\mathfrak M_\vp(\Lambda_a^\psi(n,o))$
all converge to $\mu^\xi$, as $n \to \infty$,
with respect to the weak$^*$ topology on $\mathcal M(\Sigma)$.
\end{theorem}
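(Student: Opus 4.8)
The plan is to reduce the amenable skew-product equidistribution to the abelian case via Theorem \ref{th:main}, and then establish the abelian case directly by the usual transfer-operator / Tauberian machinery, twisting by $\langle \xi, f\rangle$ as in Proposition \ref{prop:magic_xi}. The key conceptual input is that, by Theorem \ref{th:main}, $\PG(\vp, T_\psi) = \PG(\vp, T_{\bar\psi}) = \PG(\vp + \langle \xi, f\rangle, \sigma) = \mathfrak p(\xi)$, so the exponential growth rate of the weighted periodic-point sums over $\Lambda^\psi(n)$ is governed entirely by $\sigma$ with the shifted potential $\vp_\xi := \vp + \langle \xi, f\rangle$, for which BIP gives us the full strength of a Gibbs measure $\mu^\xi$ and a spectrally well-behaved transfer operator $L_{\vp_\xi}$ on $\mathcal F_\theta^b$.

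**First I would** handle the counting. Using the Fourier-in-$\Z^d$ trick from Proposition \ref{prop:magic_xi} together with the amenability reduction, one writes, for each test function $v \in C(\Sigma)$ supported in finitely many cylinders,
\[
\sum_{x \in \Lambda^\psi(n)} e^{\vp^n(x)} v_n(x) = \int_{\R^d/\Z^d} \sum_{\sigma^n x = x} e^{\vp_\xi^n(x) + 2\pi i \langle t, f^n(x)\rangle} v_n(x)\, dt + (\text{lower order}),
\]
where $v_n(x) = \sum_{j=0}^{n-1} v(\sigma^j x)$; here the reduction from the $G$-sum to the $\oG$-sum is exactly the content of Theorem \ref{th:main} applied with the potential perturbed by a small multiple of $v$, which is legitimate because Assumption (II) leaves room for perturbation and local Hölder continuity of $v$ is preserved under Birkhoff sums up to bounded variation (Lemma \ref{lem:bounded_variation}). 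Then Lemma \ref{lem:smaller_spectral_radius} confines the main contribution to a neighbourhood of $t = 0$, where the dominant eigenvalue $\lambda(\xi + 2\pi i t)$ of $L_{\vp_\xi + 2\pi i \langle t, f\rangle}$ has a non-degenerate quadratic maximum (non-degeneracy from strict convexity of $\mathfrak p$, i.e. the transitivity of $T_f$ via the aperiodicity remark). A Laplace/saddle-point expansion — this is the "analysis in \cite{PS94}" already invoked — yields asymptotics of the form $C\, e^{n\mathfrak p(\xi)} n^{-d/2}\big(\int v\, d\mu^\xi + o(1)\big)$, using that the leading eigenprojection of $L_{\vp_\xi}$ integrates a function against $\mu^\xi$. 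Taking the ratio against the same expression with $v \equiv 1$ gives $\mathfrak M_\vp(\Lambda^\psi(n))(v) \to \int v\, d\mu^\xi$.

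**Next I would** deal with the three other families $\Lambda_a^\psi(n)$, $\Lambda^\psi(n,o)$ and $\Lambda_a^\psi(n,o)$. For $\Lambda_a^\psi(n)$ one simply inserts $\mathbbm 1_{[a]}$ and repeats the argument; the BIP property guarantees that the restricted sum still grows at rate $\mathfrak p(\xi)$ (analogue of \eqref{eq:GP_periodic} and Lemma \ref{lem:GP_BIP_sp}), so no new phenomenon occurs. For the pre-image families one replaces $\mathrm{Fix}_n(\sigma) \cap [a]$ by $\mathcal Z(n,a,o)$ using the bijection $q(x) = x_1 \cdots x_n o$ exactly as in Proposition \ref{prop:magic_xi}; the discrepancy between $\vp^n$ evaluated at a periodic point and at the corresponding pre-image point is bounded (Lemma \ref{lem:bounded_variation}), so $\mathfrak M_\vp(\Lambda^\psi(n,o))$ and $\mathfrak M_\vp(\Lambda_a^\psi(n,o))$ have the same weak$^*$ limit as their periodic-point counterparts.

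**The hard part will be** making the amenability reduction uniform enough to pass from the pressure-level equality of Theorem \ref{th:main} to an asymptotic equality of the weighted sums (not merely equality of exponential rates), and controlling the perturbation by $v$ — i.e. upgrading Theorem \ref{th:main} to a statement about $\PG(\vp + \epsilon v, T_\psi) = \PG(\vp + \epsilon v, T_{\bar\psi})$ jointly in small $\epsilon$, and then differentiating at $\epsilon = 0$. One route is to prove directly a large-deviation principle for $f^n(x)/n$ under the weighted periodic-point measures for $T_\psi$ whose rate function coincides with that for $T_{\bar\psi}$ (this is where the "equidistribution is proved via large deviations" remark in the introduction is cashed in), using the Gärtner--Ellis theorem with the cumulant generating function $w \mapsto \mathfrak p(w) - \mathfrak p(\xi)$ supplied by Theorem \ref{th:main} combined with the moment condition \eqref{eq:cramer-intro} (Assumption (II)) to guarantee steepness; then the equidistribution of $\mathfrak M_\vp$ follows by a standard argument combining the LDP with the $a$-priori equidistribution for $\sigma$ with potential $\vp_\xi$ from the BIP theory (Bowen/Parry/Takahasi). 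I expect this LDP route to be the cleanest, with the main technical burden being the verification that the rate function is finite and strictly convex near $0$ and that the relevant limits are genuine limits (not just limsups), which is where topological mixing of $T_\psi$ — Assumption (I) — and Lemma \ref{lem:GP_BIP_sp} do the work.
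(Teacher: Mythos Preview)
Your first approach --- obtaining precise asymptotics $C e^{n\mathfrak p(\xi)} n^{-d/2}(\int v\,d\mu^\xi + o(1))$ via Fourier inversion and a saddle-point expansion --- does not go through as written, and you correctly flag the obstruction in your ``hard part'' paragraph. The displayed identity you write relates the torus integral to the sum over $\Lambda^{\bar\psi}(n)$ (the constraint $f^n(x)=0$), not to the sum over $\Lambda^\psi(n)$ (the constraint $\psi_n(x)=e$); these differ whenever $\ker\pi$ is nontrivial. Theorem~\ref{th:main} only gives equality of exponential growth rates, and nothing in the paper promotes this to an asymptotic equivalence of the sums or even to equality up to a bounded multiplicative factor. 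Your proposed fixes also do not close the gap: an LDP for $f^n(x)/n$ under the weighted $\Lambda^\psi(n)$ measures is vacuous (since $f^n\equiv 0$ there), and the differentiation-in-$\epsilon$ idea would require differentiability of $s\mapsto \PG(\vp+sg,T_\psi)$, which is not established and is far from immediate for a non-BIP countable state shift such as $T_\psi$.

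The paper's argument is considerably simpler and avoids all of this by an \emph{asymmetric} large-deviation bound (Lemma~\ref{lem:ld_estimate}). For a bounded locally H\"older test function $g$ and small $s>0$, one uses the exponential Chebyshev inequality and then simply drops the constraint $\psi_n(x)=e$, after first inserting the cost-free factor $e^{\langle\xi,f^n(x)\rangle}=1$:
\[
\sum_{\substack{x\in\Lambda^\psi(n)\\ \int g\,d\tau_{x,n}>\int g\,d\mu^\xi+\epsilon}} e^{\vp^n(x)}
\ \le\ e^{-ns(\int g\,d\mu^\xi+\epsilon)}\sum_{\sigma^nx=x} e^{(\vp_\xi+sg)^n(x)}.
\]
The right-hand side grows at rate $\mathfrak p_g(s):=\PG(\vp_\xi+sg,\sigma)$, and Lemma~\ref{lem:diff_of_GP} (differentiability of pressure \emph{on the base}, where BIP holds) gives $\mathfrak p_g(s)-s\mathfrak p_g'(0)-s\epsilon<\mathfrak p_g(0)=\mathfrak p(\xi)$ for small $s$. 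For the denominator one needs only the exact rate $\lim\frac{1}{n}\log\Pi_\vp(\Lambda^\psi(n))=\mathfrak p(\xi)$, and \emph{this} is precisely where Theorem~\ref{th:main}, Proposition~\ref{prop:magic_xi} and Lemma~\ref{lem:GP_BIP_sp} are invoked --- at the level of growth rates only, which is all they provide. No asymptotics, no LDP for $f$, no differentiability of $\PG(\cdot,T_\psi)$ are needed. The equidistribution then follows from Lemma~\ref{lem:ld_estimate} by the routine splitting you would expect.
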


We give the proof for the sequence $\mathfrak M_\vp(\Lambda^\psi(n))$,
the other cases being similar. The proof is based on the following lemma.

\begin{lemma}\label{lem:ld_estimate}
Let $g : \Sigma \to \R$ be a bounded locally H\"older continuous function.
Given $\epsilon>0$, there exists $C>0$ and $\eta>0$ such that
\[
\frac{1}{\Pi_\vp(\Lambda^\psi(n))} \sum_{\substack{x \in \Lambda^\psi(n) \\
|\int g \, d\tau_{x,n} -\int g \, d\mu_\xi|>\epsilon}}
e^{\vp^n(x)} \le Ce^{-\eta n}.
\]
\end{lemma}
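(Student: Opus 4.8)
The plan is to reduce the large‑deviation estimate to the spectral analysis of the twisted transfer operator already assembled in Section \ref{sec:bip}, exploiting the key equality $\PG(\vp,T_\psi)=\PG(\vp,T_{\bar\psi})=\PG(\vp+\langle\xi,f\rangle,\sigma)$ from Theorem \ref{th:main} and Proposition \ref{prop:magic_xi}. First I would replace the amenable group $G$ by its abelianized quotient: by Lemma \ref{lem:GP_BIP_sp} and Theorem \ref{th:main} the exponential growth rate of $\Pi_\vp(\Lambda^\psi(n))=\sum_{\sigma^nx=x,\,\psi_n(x)=e}e^{\vp^n(x)}$ equals $e^{n\mathfrak p(\xi)}$, and in fact the argument of Proposition \ref{prop:magic_xi} shows the sharper asymptotic $\Pi_\vp(\Lambda^\psi(n))\asymp e^{n\mathfrak p(\xi)}n^{-d/2}$; this gives the denominator. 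For the numerator, introduce a further real parameter $q$ near $0$ and consider the perturbed potential $\vp+qg+\langle\xi,f\rangle$. Writing $L_{q,s}:=L_{\vp+qg+\langle\xi+s,f\rangle}$ on $\mathcal F_\theta^b$, the same Fourier‑integral identity as in Proposition \ref{prop:magic_xi} (integrating $e^{\langle\xi+2\pi it,f^n(x)\rangle}$ over $t\in\R^d/\Z^d$ to extract the constraint $f^n(x)=0$) yields
\[
\sum_{\substack{\sigma^nx=x\\ \psi_n(x)=e}} e^{\vp^n(x)+qg^n(x)}
= \int_{\R^d/\Z^d}\operatorname{tr}\!\big(L_{q,2\pi it}^n\big)\,dt,
\]
up to a geometrically small error (using that $\psi_n(x)=e$ forces $\bar\psi_n(x)=f^n(x)=0$, and that the amenability identity lets us pass from the $G$‑constraint to the $\Z^d$‑constraint without loss in the exponential rate — this last point is where Theorem \ref{th:main} is genuinely used).

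Next I would extract the spectral leading term. For $q$ small and $s$ in a complex neighbourhood of $0$, spectral perturbation theory (as cited from \cite{MU2003}, \cite{KK}) gives $L_{q,s}$ a simple isolated maximal eigenvalue $\lambda(q,s)$ depending analytically on $(q,s)$, with $\lambda(0,0)=e^{\mathfrak p(\xi)}$ and, by Lemma \ref{lem:diff_of_GP},
\[
\frac{\partial}{\partial q}\log\lambda(q,0)\Big|_{q=0}=\int g\,d\mu^\xi .
\]
By Lemma \ref{lem:smaller_spectral_radius}, for $t\neq0$ the operator $L_{0,2\pi it}$ has spectral radius strictly below $e^{\mathfrak p(\xi)}$, and by continuity this persists for small $q$ and $t$ away from $0$; the contribution of $\|t\|\ge\delta_0$ is therefore exponentially subdominant, while near $t=0$ the analysis of \cite{PS94} (Gaussian/saddle‑point in the $d$ transverse directions) shows $\int_{\R^d/\Z^d}\operatorname{tr}(L_{q,2\pi it}^n)\,dt\sim C(q)\,\lambda(q,0)^n n^{-d/2}$. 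Hence
\[
\frac{1}{\Pi_\vp(\Lambda^\psi(n))}\sum_{x\in\Lambda^\psi(n)} e^{\vp^n(x)+qg^n(x)} \ \longrightarrow\ \frac{C(q)}{C(0)}\quad\text{with exponential rate, and with }\ \lim_{n}\tfrac1n\log(\cdots)=\log\lambda(q,0)-\mathfrak p(\xi).
\]
Finally, a standard Chebyshev/Markov argument upgrades this to the one‑sided large‑deviation bound: for $x$ with $\int g\,d\tau_{x,n}-\int g\,d\mu^\xi>\epsilon$ one has $g^n(x)>n(\int g\,d\mu^\xi+\epsilon)$, so
\[
\frac{1}{\Pi_\vp(\Lambda^\psi(n))}\sum_{\substack{x\in\Lambda^\psi(n)\\ \int g\,d\tau_{x,n}-\int g\,d\mu^\xi>\epsilon}} e^{\vp^n(x)}
\le e^{-qn(\int g\,d\mu^\xi+\epsilon)}\,\frac{1}{\Pi_\vp(\Lambda^\psi(n))}\sum_{x\in\Lambda^\psi(n)} e^{\vp^n(x)+qg^n(x)},
\]
and since $\log\lambda(q,0)-\mathfrak p(\xi)=q\int g\,d\mu^\xi+O(q^2)$, choosing $q>0$ small makes the exponent $-q\epsilon+O(q^2)<0$; the symmetric choice $q<0$ handles the deviation $<-\epsilon$. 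Adding the two estimates gives the claimed $Ce^{-\eta n}$ with $\eta>0$.

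The main obstacle I anticipate is not the large‑deviation bookkeeping but justifying the Fourier‑integral reduction \emph{with the group constraint $\psi_n(x)=e$ rather than the abelian constraint $f^n(x)=0$}: a priori $\Lambda^\psi(n)\subsetneq\{x:\sigma^nx=x,\ f^n(x)=0\}$, so one only gets an inequality in one direction for free, and one must invoke Theorem \ref{th:main} (equality of Gurevi\v{c} pressures) together with the mixing of $T_\psi$ (Lemma \ref{lem:GP_BIP_sp}) to see that the two weighted sums have the same exponential growth rate, and moreover that restricting the weighted sum to $\Lambda^\psi(n)$ does not destroy the $n^{-d/2}$‑type asymptotic needed for the ratio to converge; controlling the lower‑order term here, uniformly in the auxiliary parameter $q$, is the delicate point. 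A secondary technical nuisance is that $g$ is only bounded and locally Hölder, so one must check $\vp+qg+\langle\xi,f\rangle$ remains summable and within the domain where the quasi‑compactness and perturbation results of \cite{MU2003}, \cite{KK} apply — this follows from Assumption (II) and boundedness of $g$ for $|q|$ small.
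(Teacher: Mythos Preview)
Your final Chebyshev/Markov step is exactly what the paper does, but everything before it is unnecessary, and the obstacle you flag is self-inflicted. You try to obtain \emph{precise} asymptotics (down to the $n^{-d/2}$ factor) for $\sum_{x\in\Lambda^\psi(n)} e^{\vp^n(x)+qg^n(x)}$ via the Fourier integral, and rightly note that this only captures the abelian constraint $f^n(x)=0$, not $\psi_n(x)=e$; resolving this uniformly in $q$ would indeed be delicate. But none of it is needed. The paper observes that for the Chebyshev bound one only needs an \emph{upper bound} on the numerator and the \emph{exponential rate} of the denominator. For the numerator: since $\psi_n(x)=e$ forces $f^n(x)=0$, one may freely insert $\langle\xi,f^n(x)\rangle=0$ and then simply drop the constraint,
\[
\sum_{x\in\Lambda^\psi(n)} e^{\vp^n(x)+sg^n(x)}
=\sum_{x\in\Lambda^\psi(n)} e^{(\vp+\langle\xi,f\rangle+sg)^n(x)}
\le \sum_{\sigma^nx=x} e^{(\vp+\langle\xi,f\rangle+sg)^n(x)},
\]
whose growth rate is $\mathfrak p_g(s):=\PG(\vp+\langle\xi,f\rangle+sg,\sigma)$ by (\ref{eq:GP_periodic}). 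For the denominator, Theorem \ref{th:main}, Lemma \ref{lem:GP_BIP_sp} and Proposition \ref{prop:magic_xi} give $\lim_n\tfrac1n\log\Pi_\vp(\Lambda^\psi(n))=\mathfrak p_g(0)$. The Chebyshev exponent is then $\mathfrak p_g(s)-s\mathfrak p_g'(0)-|s|\epsilon-\mathfrak p_g(0)$, negative for small $s\ne 0$ by Lemma \ref{lem:diff_of_GP}. No Fourier integral, no $n^{-d/2}$, no uniformity in $q$, and the non-abelian constraint is handled by a one-line inclusion.

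A related slip: your claimed equality $\lim_n\tfrac1n\log\bigl(\Pi_\vp(\Lambda^\psi(n))^{-1}\sum_{x\in\Lambda^\psi(n)}e^{\vp^n(x)+qg^n(x)}\bigr)=\log\lambda(q,0)-\mathfrak p(\xi)$ is not quite right; applying Theorem \ref{th:main} and Proposition \ref{prop:magic_xi} to the potential $\vp+qg$ gives the left side as $\PG(\vp+qg+\langle\xi_q,f\rangle,\sigma)-\mathfrak p(\xi)$ with $\xi_q$ the minimiser for $\vp+qg$, which in general differs from $\xi$ and is $\le\log\lambda(q,0)-\mathfrak p(\xi)$. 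The inequality goes the right way so your argument survives, but it is another sign that chasing the exact asymptotic is a detour.
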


\begin{proof}
We consider $x \in \Lambda^\psi(n)$ such that 
$\int g \, d\tau_{x,n} > \int g \, d\mu_\xi +\epsilon$ and
$\int g \, d\tau_{x,n} < \int g \, d\mu_\xi -\epsilon$ separately.
For $s>0$, we have
\[
\sum_{\substack{x \in \Lambda^\psi(n) \\ \int g \, d\tau_{x,n} >\int g \, d\mu^\xi +\epsilon}}
e^{\vp^n(x)}
\le 
\sum_{x \in \Lambda^\psi(n)} e^{\vp^n(x) + sg^n(x) - ns\int g \, d\mu^\xi -ns\epsilon},
\]
so that
\[
\limsup_{n \to \infty} \frac{1}{n} \log 
\sum_{\substack{x \in \Lambda^\psi(n) \\ \int g \, d\tau_{x,n} >\int g \, d\mu^xi +\epsilon}}
e^{\vp^n(x)}
\le \PG(\vp+\langle \xi,f\rangle +sg,\sigma) -s\int g \, d\mu^\xi -s\epsilon.
\]
Similarly, for $s<0$, we have
\[
\limsup_{n \to \infty} \frac{1}{n} \log 
\sum_{\substack{x \in \Lambda^\psi(n) \\ \int g \, d\tau_{x,n} <\int g \, d\mu^\xi -\epsilon}}
e^{\vp^n(x)}
\le \PG(\vp+\langle \xi,f\rangle +sg,\sigma) -s\int g \, d\mu^\xi +s\epsilon.
\]
If we write $\mathfrak p_g(s) := \PG(\vp+\langle \xi,f\rangle +sg)$ then,
applying Lemma \ref{lem:diff_of_GP}, we have a bound of the form
\[
\limsup_{n \to \infty} \frac{1}{n} \log
\sum_{\substack{x \in \Lambda^\psi(n) \\
|\int g \, d\tau_{x,n} -\int g \, d\mu_\xi|>\epsilon}}
e^{\vp^n(x)} 
\le
\mathfrak p_g(s) - s\mathfrak p_g'(0) -\epsilon |s|,
\]
which is strictly smaller than $\PG(\vp+\langle \xi,f \rangle)$ for sufficiently small values of $s$. 
The proof is completed by noting that, by
Theorem \ref{th:main} and
Proposition \ref{prop:magic_xi},
\[
\lim_{n \to \infty} \frac{1}{n} \log \Pi_\vp(\Lambda^\psi(n)) = \PG(\vp+\langle \xi,f \rangle,\sigma).
\]
\end{proof}

\begin{proof}[Proof of Theorem \ref{th:equi_for_G}]
Write $\mathfrak M_n = \mathfrak M_\vp(\Lambda^\psi(n))$.
We need to show that for every bounded continuous function
$g : \Sigma \to \R$ we have
\[
\lim_{n \to \infty} \int g \, d\mathfrak M_n = \int g \, d\mu^\xi.
\]
It is enough to prove this when $g$ is a bounded locally H\"older continuous function.
We have
\begin{align*}
\int g \, d\mathfrak M_n - \int g \, d\mu^\xi 
&= \frac{1}{\Pi_\vp(\Lambda^\psi(n))}
\sum_{\substack{x \in \Lambda^\psi(n) \\ \left|\int \chi \, d\tau_{x,n} - \int \chi d\mu^\xi\right| > \epsilon}} 
e^{\vp^n(x)}
\int g \, d\tau_{x,n} 
\\
&+  \frac{1}{\Pi_\vp(\Lambda^\psi(n))}\sum_{\substack{x \in \Lambda^\psi(n) \\ \left|\int \chi \, d\tau_{x,n} - \int g d\mu^\xi\right| \le \epsilon}}  
e^{\vp^n(x)}
\int g \, d\tau_{x,n}-\int \chi \, d\mu^\xi.
\end{align*}
By Lemma \ref{lem:ld_estimate}, the first term on the right hand side tends to zero
exponentially fast. Also,
\begin{align*}
&\left|
\frac{1}{\Pi_\vp(\Lambda^\psi(n))}
\sum_{\substack{x \in \Lambda^\psi(n) \\ \left|\int g \, d\tau_{x,n} - \int g d\mu^\xi\right| \le \epsilon}} 
e^{\vp^n(x)}
\int g \, d\tau_{x,n}-\int g \, d\mu^\xi
\right|
\\
&\le
\epsilon + 
\frac{\left|\int g \, d\mu^\xi\right|}{\Pi_\vp(\Lambda^\psi(n))} 
\sum_{\substack{x \in \Lambda^\psi(n) \\ \left|\int g \, d\tau_{x,n} - \int g d\mu^\xi\right| > \epsilon}} 
e^{\vp^n(x)}
\le \epsilon + Ce^{-\eta(\epsilon)n},
\end{align*}
which, since $\epsilon$ is arbitrary, gives the result.

\end{proof}

\section{Equality implies amenability}\label{sec:eq imp am}

In this section we show that, under Assumptions (I), (II) and (III), the converse of Theorem 
\ref{th:main}
holds, i.e.
if $\PG(\vp,T_\psi)=\PG(\vp,T_{\bar \psi})$ then $G$ is 
amenable. As above, we write $f = \bar{\psi}$.

To do this, we need to specify a function space for $\mathcal L_\vp$ to act on.
Let $\mathcal H$ denote the set of continuous functions
$v : \Sigma \times G \to \C$ such that $g \mapsto \|v(\cdot,g)\|_\infty$ is in $\ell^2(G)$, with norm
\[
\|v\|_{\mathcal H} = \left(\sum_{g \in G} \|v(\cdot,g)\|_\infty^2\right)^{1/2},
\]
and let $\mathrm{spr}_{\mathcal H}(\mathcal L_\vp)$ denote the spectral radius of $\mathcal L_\vp$ on
$\mathcal H$.
We always have 
\[
\PG(\vp,T_\psi) \le \log \mathrm{spr}_{\mathcal H}(\mathcal L_{\vp}) \le \PG(\vp,\sigma).
\]

We state a result due to Stadlbauer.

\begin{proposition}[Stadlbauer \cite{Stadlbauer2013}, Theorem 5.4]\label{prop:Stadlbauer}
Suppose that $\sigma : \Sigma \to \Sigma$  satisfies BIP 
and that $T_\psi : \Sigma \times G \to \Sigma \times G$ is topologically transitive.
If $G$ is not amenable then $\log \mathrm{spr}_{\mathcal H}(\mathcal L_\vp) <\PG(\vp,\sigma)$.
\end{proposition}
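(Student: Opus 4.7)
The plan is to imitate the classical Kesten amenability dichotomy in the transfer operator setting. The BIP hypothesis supplies, via Sarig or Mauldin--Urba\'nski, a positive locally H\"older eigenfunction $h_0$ of $L_\vp$ with eigenvalue $\lambda = e^{\PG(\vp,\sigma)}$ and corresponding Gibbs measure $\mu_\vp$. Conjugating by $h_0$, set $\tilde \vp = \vp + \log h_0 - \log(h_0\circ\sigma) - \log\lambda$, so that $L_{\tilde\vp}1 = 1$. Because this conjugation is by a bounded, strictly positive function and modifies $\mathcal L_\vp$ only by a similarity and the scalar $\lambda$, it suffices to establish $\mathrm{spr}_{\mathcal H}(\mathcal L_{\tilde \vp}) < 1$ under the hypothesis that $G$ is non-amenable.

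Next, I would decompose $\mathcal L_{\tilde\vp}^n$ by grouping pre-images according to the value $\gamma=\psi_n(y)\in G$. Setting
\[
k_n(x,\gamma) = \sum_{\substack{\sigma^n y = x\\ \psi_n(y)=\gamma}} e^{\tilde\vp^n(y)},
\]
the normalization gives $\sum_\gamma k_n(x,\gamma)=1$, so $k_n(x,\cdot)$ is a probability measure on $G$. The topological transitivity of $T_\psi$ together with BIP guarantees that, for $n$ large enough, the supports of $k_n(x,\cdot)$ generate $G$ as $x$ varies. The action of $\mathcal L_{\tilde \vp}^n$ on $v\in\mathcal H$ is then a convex combination of operators that evaluate $v$ at a pre-image of $x$ and translate the $G$-coordinate by $\gamma^{-1}$; this is exactly the structure of a cocycle random walk on $G$ driven by the base dynamics.

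To exploit Kesten, I would symmetrize by passing to $\mathcal L_{\tilde\vp}^*\mathcal L_{\tilde\vp}$, whose spectral radius on $\mathcal H$ is $\mathrm{spr}_{\mathcal H}(\mathcal L_{\tilde\vp})^2$. Expanding its kernel in the $G$-variable and using the factorization $\mathcal H \subseteq L^\infty(\Sigma)\otimes\ell^2(G)$, the $G$-action reduces to convolution by a symmetric probability measure on $G$ whose support generates $G$. Kesten's theorem bounds the resulting convolution operator on $\ell^2(G)$ strictly below $1$, and this gap transfers back to $\mathcal L_{\tilde\vp}$ using the spectral gap of the Ruelle operator on $\mathcal F_\theta^b$ (which exists under BIP) to absorb the base factor.

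The main obstacle I expect is the passage from a \emph{cocycle} random walk, whose step distribution $k_n(x,\cdot)$ depends on the base point $x$, to an honest random walk on $G$ for which Kesten's criterion applies directly. Controlling this $x$-dependence is where BIP is used most heavily: the uniform local H\"older estimates on $\tilde\vp^n$ provided by Lemma \ref{lem:bounded_variation} ensure that the distributions $k_n(x,\cdot)$ are uniformly comparable for $x$ ranging over any fixed cylinder, so one may replace them by an averaged distribution up to a bounded multiplicative distortion. Arranging the Cauchy--Schwarz bookkeeping so that this distortion does not swallow the Kesten gap is the delicate point; Stadlbauer's route is to work in a weighted $\ell^2$-space adapted to the Gibbs state $\mu_\vp$, so that the reverse Markov chain on $\Sigma$ becomes stationary and the cocycle walk can be averaged against the equilibrium measure.
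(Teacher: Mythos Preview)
The paper does not give its own proof of this proposition; it is quoted as a black box from \cite{Stadlbauer2013} and invoked in the one-line proof of Proposition \ref{prop:eia}. So there is no ``paper's proof'' to compare against.

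That said, your sketch is a faithful outline of Stadlbauer's argument. The normalization by the BIP eigenfunction $h_0$ (which under BIP is bounded above and away from zero, so the conjugation really does preserve $\mathrm{spr}_{\mathcal H}$ up to the factor $\lambda$), the decomposition of $\mathcal L_{\tilde\vp}^n$ according to the value of $\psi_n$, and the reduction to a Kesten-type spectral bound on $\ell^2(G)$ are all present in \cite{Stadlbauer2013}. You have also correctly located the genuine difficulty: the step distributions $k_n(x,\cdot)$ depend on $x$, so one is not dealing with an i.i.d.\ random walk on $G$ but with a cocycle walk driven by the base. Stadlbauer's resolution is essentially what you describe in your final paragraph---he works in an $\ell^2$-space weighted by the Gibbs/conformal measure so that the adjoint has a clean form, and then uses bounded distortion (available because of BIP and local H\"older continuity) to compare the cocycle walk uniformly to a fixed symmetric probability on $G$, at which point Kesten applies. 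The only place your sketch is a little glib is the phrase ``absorb the base factor'': in practice one does not need the full spectral gap of the Ruelle operator on $\mathcal F_\theta^b$, only the normalization $L_{\tilde\vp}1=1$ together with Cauchy--Schwarz and the uniform comparability of the $k_n(x,\cdot)$ over cylinders. But this is a matter of emphasis rather than a gap.
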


We now have the desired result.

\begin{proposition}\label{prop:eia}
If $\PG(\vp,T_\psi)=\PG(\vp,T_{\bar \psi})$ then 
$G$ is amenable.
\end{proposition}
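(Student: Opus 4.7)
The plan is to argue by contrapositive: assume $G$ is \emph{not} amenable and show that $\PG(\vp,T_\psi) < \PG(\vp,T_{\bar \psi})$. The key idea is that the ``magic'' potential $\vp + \langle \xi, f \rangle$ provided by Proposition \ref{prop:magic_xi} (with $f = \bar\psi$) is precisely the potential whose $\sigma$-Gurevi\v{c} pressure equals $\PG(\vp,T_{\bar\psi})$, and simultaneously, when restricted to $T_\psi$-periodic orbits, it contributes the same weight as $\vp$ itself.

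First I would set $\widetilde \vp := \vp + \langle \xi, f\rangle$ and verify that this is a legitimate potential for the Stadlbauer machinery: it is locally H\"older continuous (since $f$ is locally constant by our assumption on $\psi$) and summable on $\Sigma$ (by Assumption (II), since $\xi \in \mathrm{int}(B(\delta))$). Thus $\mathcal L_{\widetilde \vp}$ defines a bounded operator on the Hilbert space $\mathcal H$ of the previous section, and Proposition \ref{prop:Stadlbauer} applies to $(\sigma, T_\psi, \widetilde \vp)$.

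Next I would observe two equalities. On the one hand, by Proposition \ref{prop:magic_xi},
\[
\PG(\widetilde \vp, \sigma) = \PG(\vp + \langle \xi, f\rangle, \sigma) = \PG(\vp, T_{\bar \psi}).
\]
On the other hand, for any periodic orbit $x$ of $\sigma$ with $\psi_n(x) = e$ we have $\bar\psi_n(x) = f^n(x) = 0$, so $\langle \xi, f^n(x)\rangle = 0$ and therefore $\widetilde \vp^{\,n}(x) = \vp^n(x)$. Combining this with Lemma \ref{lem:GP_BIP_sp} (which applies because Assumption (I) gives topological mixing of $T_\psi$), we deduce
\[
\PG(\widetilde \vp, T_\psi) = \PG(\vp, T_\psi).
\]

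Finally, assuming $G$ is non-amenable, Proposition \ref{prop:Stadlbauer} applied to $\widetilde \vp$ yields $\log \mathrm{spr}_{\mathcal H}(\mathcal L_{\widetilde \vp}) < \PG(\widetilde \vp, \sigma)$. Chaining the general inequality $\PG(\widetilde \vp, T_\psi) \le \log \mathrm{spr}_{\mathcal H}(\mathcal L_{\widetilde \vp})$ with the two equalities above gives
\[
\PG(\vp, T_\psi) = \PG(\widetilde \vp, T_\psi) \le \log \mathrm{spr}_{\mathcal H}(\mathcal L_{\widetilde \vp}) < \PG(\widetilde \vp, \sigma) = \PG(\vp, T_{\bar\psi}),
\]
contradicting the hypothesized equality. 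The main conceptual step, rather than a technical obstacle, is recognizing that one should apply Stadlbauer's dichotomy not to $\vp$ itself but to the tilted potential $\vp + \langle \xi, f\rangle$; once this shift is made, everything reduces to bookkeeping that $\xi$ is invisible on orbits with $\psi_n = e$ and that it converts $\PG(\cdot, \sigma)$ into $\PG(\vp, T_{\bar\psi})$.
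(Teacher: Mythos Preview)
Your proof is correct and follows essentially the same route as the paper: argue by contrapositive, replace $\vp$ by the tilted potential $\vp_\xi = \vp + \langle \xi, f\rangle$, and chain the inequalities $\PG(\vp,T_\psi) = \PG(\vp_\xi,T_\psi) \le \log \mathrm{spr}_{\mathcal H}(\mathcal L_{\vp_\xi}) < \PG(\vp_\xi,\sigma) = \PG(\vp,T_{\bar\psi})$, with the strict inequality coming from Proposition~\ref{prop:Stadlbauer}. Your write-up is more explicit in checking that $\vp_\xi$ is summable and in justifying the first equality via Lemma~\ref{lem:GP_BIP_sp}, but note that the latter already follows directly from the definition of Gurevi\v{c} pressure, since periodic points of $T_\psi$ satisfy $\psi_n(x)=e$ and hence $f^n(x)=0$.
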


\begin{proof}
Let $\xi \in \R^d$ be as in the previous section and let $\vp_\xi = \vp +\langle \xi,f \rangle$.
Suppose that $G$ is not amenable. Then
\[
\PG(\vp,T_\psi) = \PG(\vp_\xi,T_\psi) \le \log \mathrm{spr}_{\mathcal H}(\mathcal L_{\vp_\xi})
< \PG(\vp_\xi,\sigma) = \PG(\vp,T_{\bar \psi}),
\]
where the strict inequality uses Proposition \ref{prop:Stadlbauer}.
\end{proof}


\begin{thebibliography}{90}

\bibitem{BabLed}
M.~Babillot and F.~Ledrappier,
\emph{Lalley’s theorem on periodic orbits of hyperbolic flows}, 
Ergodic Theory Dynam. Sys. 18, 17--39, 1998.

\bibitem{Bowen1971}
R.~Bowen,
\emph{Periodic points and measures for Axiom A diffeomorphisms},
Trans. Amer. Math. Soc. 154, 377--397, 1971.

\bibitem{BCKM}
H.~Bray, R.~Canary, L.-Y.~Kao and G. Martone,
\emph{Counting, equidistribution and entropy gaps at infinity with applications to
cusped Hitchin representations},
J. Reine Angew. Math. 791, 1--51, 2022.

\bibitem{Brooks1981}
R.~Brooks,
\emph{The fundamental group and the spectrum of the Laplacian},
Comment. Math. Helv. 56, 581--598, 1981.

\bibitem{Brooks1985}
R.~Brooks,
\emph{The bottom of the spectrum of a Riemannian covering},
J. Reine Angew. Math. 357, 101--114, 1985.

\bibitem{Cohen1982}
J.~Cohen,
\emph{Cogrowth and amenability in discrete groups},
J. Funct. Anal. 48, 301--309, 1982.

\bibitem{ColesSharp}
S.~Coles and R.~Sharp,
\emph{Helicity, linking and the distribution of null-homologous periodic orbits for Anosov flows},
 Nonlinearity 36, 21--58, 2023.

\bibitem{CDS}
R.~Coulon, F.~Dal'Bo and A.~Sambusetti,
\emph{Growth gap in hyperbolic groups and amenability},
Geom. Funct. Anal. 28, 1260--1320, 2018.

\bibitem{CDST}
R.~Coulon, R.~Dougall, B.~Schapira and S.~Tapie,
\emph{Twisted Patterson--Sullivan measures and applications to amenability and coverings},
Mem. Amer. Math. Soc. 

\bibitem{DU}
M.~Denker and M.~Urba\'nski,
\emph{On the existence of conformal measures},
Trans. Amer. Math. Soc. 328, 563--587, 1991.

\bibitem{Dougall2019}
R.~Dougall,
\emph{Critical exponents of normal subgroups, the spectrum of group extended transfer operators, and Kazhdan distance},
Adv. Math. 349, 316--347, 2019.

\bibitem{DougallSharp2016}
R.~Dougall and R.~Sharp,
\emph{Amenability, critical exponents of subgroups and growth of closed geodesics},
Math. Ann. 365, 1359--1377, 2016.

\bibitem{DougallSharp2021}
R.~Dougall and R.~Sharp,
\emph{Anosov flows, growth rates on covers and group extensions of subshifts},
Invent. math. 223, 445--483, 2021.
(Correction: Invent. math. 236, 1505--1509, 2024.)

\bibitem{DougallSharp2024}
R.~Dougall and R.~Sharp,
\emph{A non-symmetric Kesten criterion and ratio limit theorem for random walks on amenable groups},
Int. Math. Res. Notices Volume 2024, Issue 7, 6209--6223, 2024.

\bibitem{GomezTerhesiu2025}
J.~Gomez and D.~Terhesiu,
\emph{Ratio limits and pressure function for group extensions of Gibbs Markov maps},
arXiv:2507.08186.

\bibitem{Grigorchuk1980}
R.~Grigorchuk,
\emph{Symmetrical random walks on discrete groups},
Multicomponent Random Systems, Adv. Probab. RelatedTopics 6, Dekker, New York, 1980, pp. 285--325.



\bibitem{KM}
L.-Y.~Kao and G.~Martone,
\emph{Correlation number for potentials with entropy gaps and cusped Hitchin representations},
arXiv:2412.20627.

\bibitem{KK}
M.~Kesseb\"ohmer and S.~Kombrink,
\emph{A complex Ruelle--Perron--Frobenius theorem for infinite Markov shifts with applications
to renewal theory},
Disc. Cont. Dyn. Sys. 10, 335--352, 2017.

\bibitem{Kesten}
H.~Kesten,
\emph{Full Banach mean values on countable groups},
Math. Scand. 7, 146--156, 1959. 




\bibitem{LalleyAdv}
S.~Lalley,
\emph{Distribution of periodic orbits of symbolic and Axiom A flows},
Adv. Appl. Math. 8, 154--193, 1987.


\bibitem{MU2001}
R.~D.~Mauldin and M.~Urba\'nski,
\emph{Gibbs states on the symbolic spaces over an infinite alphabet},
Israel J. Math. 125, 93--130, 2001.

\bibitem{MU2003}
R.~D.~Mauldin and M.~Urba\'nski,
\emph{Graph Directed Markov Systems: Geometry and Dynamics of Limit Sets},
Cambridge Tracts in Mathematics 148, Cambridge University Press, Cambridge, 2003.

\bibitem{Northshield1992}
S.~Northshield,
\emph{Cogrowth of regular graphs},
Proc. Amer. Math. Soc. 116, 203--205, 1992.

\bibitem{Northshield2004}
S.~Northshield,
\emph{Cogrowth of arbitrary graphs},
Random Walks and Geometry, de Gruyter, Berlin 2004, pp. 501--513.

\bibitem{Parry1988}
W.~Parry,
\emph{Equilibrium states and weighted uniform distribution of closed orbits}, Lecture Notes in Math., 1342, Springer--Verlag, Berlin, 617--625, 1988.

\bibitem{PP}
W.~Parry and M.~Pollicott,
\emph{Zeta functions and the periodic orbit functions of hyperbolic dynamics},
Ast\'erisque 187--188, Soci\'et\'e Math\'ematique de France, 1990.



\bibitem{PS94}
M.~Pollicott and R.~Sharp,
\emph{Rates of recurrence for $\Z^q$ and $\R^q$ extensions of subshifts of finite type},
J. London Math. Soc. 49, 401--416, 1994.

\bibitem{Roblin}
T.~Roblin,
\emph{Un th\'eor\`eme de Fatou pour les densit\'es conformes avec applications
aux rev\^etements Galoisiens en courbure n\'egative},
Israel J. Math. 147, 333--357, 2005.


\bibitem{Sarig-etds}
O.~Sarig,
\emph{Thermodynamic formalism for countable Markov shifts},
Ergodic Theory Dynam. Systems 19, 1565--1593, 1999.


\bibitem{Sarig-PAMS}
O.~Sarig,
\emph{Existence of Gibbs measures for countable state Markov shifts},
Proc. Amer. Math. Soc. 131, 1751--1758, 2003.

\bibitem{Sarig-survey}
O.~Sarig,
\emph{Thermodynamic formalism for countable Markov shifts} in Hyperbolic dynamics, fluctuations
and large deviations, 81--117,
Proc. Sympos. Pure Math. 89, American Mathematical Society, Providence, RI, 2015.


\bibitem{Sharp93}
R.~Sharp,
\emph{Closed orbits in homology classes for Anosov flows},
Ergodic Theory Dynam. Sys. 13, 387--408, 1993.


\bibitem{Stadlbauer2013}
M.~Stadlbauer,
\emph{An extension of Kesten's criterion for amenability to topological Markov chains},
Adv. Math. 235, 450--468, 2013.


\bibitem{Stadlbauer2019}
M.~Stadlbauer,
\emph{On conformal measures and harmonic functions for group extensions}.
In the Proceedings of ``New Trends in One-Dimensional Dynamics'', ed. M.~J.~Pacifico
and P.~Guarino, Springer Proceedings in Mathematics \& Statistics, pages 275--304,
Springer Nature Switzerland, 2019.


\bibitem{Takahasi2019}
H.~Takahasi,
\emph{Large deviation principles for countable Markov shifts},
Trans. Amer. Math. Soc. 372, 7831--7855, 2019.



\end{thebibliography}
\end{document}